\documentclass[11pt,a4paper]{article}
\usepackage{amssymb, amstext, amscd, amsmath, color}
\usepackage{graphicx, url, hhline}
\usepackage{mathrsfs} 
\usepackage[left=2.50cm, right=2.00cm, top=2.30cm, bottom=2.00cm]{geometry}

\usepackage{float}

\usepackage{longtable}
\usepackage{multicol}

\newtheorem{defin}{}

\newtheorem{conjec}[defin]{}
\newtheorem{lemmas}[defin]{}

\newtheorem{bemerk}[defin]{}
\newtheorem{prop}{Proposition} 
\newtheorem{theorem}{Theorem}
\newtheorem{corollary}{Corollary}

\newenvironment{proof}    {{\noindent \it Proof}:}{{\fillbox \medskip}} 

\newcommand{\fillbox}{\mbox{$\bullet$}}

\newcommand{\ti}{\tilde}
\newcommand{\N}{\mathbb N}

\newcommand{\Z}{\mathbb Z}

\newcommand{\R}{\mathbb R}
\newcommand{\GAP}{\mathsf{GAP}}

\newcommand{\bolit}{\boldsymbol}

\begin{document}

\bigskip
\bigskip

\centerline{\large \textbf{Subperiodic groups and bounded automorphisms of periodic graphs}}

\bigskip
\bigskip

\centerline{\large Igor A. Baburin}
\centerline{\emph{Email: baburinssu@gmail.com}}

\bigskip

\noindent A subperiodic group is a group of motions of $d$-dimensional Euclidean space $\R^d$ which contains a translation lattice $\Z^r$ of rank $r < d$ as a subgroup of finite index. A classification into abstract group isomorphism classes is performed for subperiodic groups in dimension~3: 75 \emph{crystallographic} rod groups ($r=1$) and 80 layer groups ($r=2$) are shown to belong to 32 and 34 isomorphism classes, respectively. An easy-to-compute set of invariants is developed for recognizing these isomorphism classes from finite presentations which makes use only of the number of subgroups up to a given finite index~$n$ ($n \leq 12$ for rod groups and $n \leq 8$ for layer groups) and how many of them are normal. Cayley graphs of rod and layer groups are used to illustrate the concept of bounded automorphisms of finite order, \emph{i.e.} those when the distance between a graph vertex and its image has an upper bound. It is proven that a Cayley graph of a crystallographic space group $G$ (in which case $r=d$) possesses bounded automorphisms of finite order, if and only if the respective inverse-closed generating set is stabilized by conjugation by an element of finite order in $G$. As an application, subperiodic groups in $\R^4$ with a three-dimensional translation lattice are used to systematically derive embeddings of three-periodic \emph{ladder graphs} in~$\R^3$.

\section{Introduction}

\bigskip

Crystallographic point groups and space groups are the isometry groups most commonly used in crystallography. In algebraic terms a crystallographic \emph{space group} $G$ is an extension of a free abelian group $T \cong \Z^d$ of rank $d$ by a finite group $P = G/T$ so that $P$ acts faithfully on $T$. The group $P$ is the point group of $G$, and $T$ is its translation subgroup (translation lattice) that we shall also denote by $P(G)$ and $T(G)$, respectively. A \emph{subperiodic group} corresponds to the situation when the rank of the translation lattice $T \cong \Z^r$ to be extended by $P$ is smaller than~$d$ \cite{Koe80}. In this case the group $P$ need not act faithfully on $T$ that gives rise to certain properties which do not hold for space groups. Let $H$ be a subperiodic group in $\R^d$ with translation lattice~$\Z^r$. Then

\begin{enumerate}
\setlength\itemsep{-0.02in}
\item[(a)] $H$ can be isomorphic to some space group in $\R^r$,
\item[(b)] $H$ can have non-trivial finite normal subgroups [owing to the non-faithful action of $P(H)$ on $T(H)$] and therefore cannot be isomorphic to a space group of $\R^d$ for any $d$,
\item[(c)] subperiodic groups can be isomorphic without being affinely equivalent (\emph{e.g.} groups with non-isomorphic point groups can be isomorphic),
\end{enumerate}

\noindent where affine equivalence of groups means, as usual, their conjugacy in the group of all affine mappings of~$\R^d$.

\medskip

\noindent \emph{Example}: Consider the following extensions of a 1-dimensional lattice by finite groups generated by mirror reflections in coordinate planes and 2-fold rotations about coordinate axes: $H_1 = p2mm = \langle m_{xy}, m_{xz}, t_z \rangle$, $H_2 = pmm2 = \langle m_{xz}, m_{yz}, t_z \rangle$ and $H_3 = p222 = \langle 2_{x}, 2_{y}, t_z \rangle$, where $t_z$  stands for the unit translation along the $z$-axis. The non-isomorphism of $H_1$ and $H_2$ can be seen if their maximal finite normal subgroups are regarded: such a subgroup of $H_1$ is generated by one reflection whereas that of $H_2$ is generated by two reflections. As a result, the following factorisations are obtained: $H_1 \cong H_3 \cong D_{\infty} \times C_{2}$ and $H_2 \cong C_{\infty} \times D_{2}$.

\medskip

Subperiodic groups in dimension 3 with translation lattices of rank 1 and 2 are catalogued in Volume~E of the \emph{International tables for crystallography} (2006) where conventional coordinate description is given for 75~\emph{crystallographic} rod groups ($r=1$) and 80~layer groups ($r=2$). The numbers of groups originate from the conjugacy in the group of all \emph{orientation-preserving} affine mappings of $\R^3$ -- \emph {i.e.} from the same equivalence relation that yields 230 space groups in $\R^3$. Note that the finite number of rod groups is only due to the restriction on orders of rotation axes (1, 2, 3, 4, 6) that is arbitrary for $r=1$.

Subperiodic groups can be also viewed as quotient groups of space groups in $\R^d$ with respect to (maximal) normal translation subgroups of rank $r < d$. For example, every hexagonal space group~$G$ in $\R^3$ has normal rank-1 and rank-2 translation subgroups generated by unit translations: $\langle t_z \rangle$ and $\langle t_x, t_y \rangle$. The respective quotients naturally give rise to the rod group $G/\langle t_x, t_y \rangle$ and layer group $G/\langle t_z \rangle$. 

Subperiodic groups of $\R^3$ find applications in solid-state physics as symmetry groups of nanostructures (nanotubes and two-dimensional materials) where representation theory is the main tool \cite{Linegr10}. In this paper we shall develop a more abstract view of subperiodic groups motivated by application to automorphisms of vertex-transitive periodic graphs, specifically \emph{bounded} automorphisms of finite order. The history of the problem goes back to the beginning of 1980s when M.~Gromov proved his famous theorem on groups of polynomial growth \cite{Gromov81} (see Appendix~A for more details). Shortly after that, by extending this result, V.~I.~Trofimov \cite{Trofimov84} characterized vertex-transitive graphs of polynomial growth (of which periodic graphs are a special case) in terms of the structure of automorphism groups, and in terms of partitions of vertex sets with (so to speak) convenient properties. The structures usually dealt with in crystallography have translational symmetry that already implies polynomial growth (see below for the definition). Trofimov's theorem \cite{Trofimov84} thus tells us which additional automorphisms -- beyond the space-group symmetry -- periodic structures might possess. It is precisely where bounded automorphisms of finite order come into play. Eon (2005) rediscovered the idea of bounded automorphisms independently and introduced it to crystallography \cite{Eon2005}. Recently (\emph{ca.}~2019--2023), the results of \cite{Trofimov84} have been proved (or reinterpreted) in the language of topological groups \cite{Salle19, Tes21} that allowed to obtain some new, including also more specialized, results.

In physics and chemistry it is often the case that certain mathematical results have to be \emph{felt} in addition to be rigorously \emph{proven}. In our opinion, bounded automorphisms of finite order in periodic graphs still remain somewhat obscure to crystallographers, and subperiodic groups provide an adequate framework for a better understanding of this phenomenon. Broadly speaking, this paper is a further adaptation of \cite{Trofimov84} to periodic graphs, with the main emphasis on computational group-theoretic aspects. A more abstract treatment of subperiodic groups in relation to graph automorphisms also leads to interesting applications, for example, exploring non-isotopic embeddings of graphs in $\R^d$ which are used to complement a group-theoretic discussion. 

The structure of the paper is as follows. Section~2 contains preliminaries on the terminology used. Section 3 deals with the classification of subperiodic groups in $\R^3$ into abstract group isomorphism classes. The language of generating sets and associated Cayley graphs is used throughout, also for illustrative purposes. Cayley graphs are always characterized with respect to bounded automorphisms of finite order. The logic of our exposition follows from that bounded automorphisms of finite order naturally become more and more rare on going from rod groups to space groups. Section~4 discusses in depth the automorphisms of Cayley graphs which are induced by the conjugation action of a group. Finally, for Cayley graphs of space groups a group-theoretic criterion is proved that is necessary and sufficient for deciding whether or not its full automorphism group is isomorphic to a space group. Section~4 includes various examples and describes some applications, mainly inspired from the analysis of sphere-packing graphs. 

\section{Preliminaries on groups and graphs}

Our terminology for crystallographic groups is mostly standard (\emph{cf.} above). More general things shall be introduced here. For a group $G$, its center is denoted by $Z(G)$. For subgroup $H<G$, $C_G(H)$ and $N_G(H)$ are its centralizer and normalizer in $G$, respectively. The index of $H$ in $G$ is $|G:H|$. For an inverse-closed subset $S \subset G$, define its stabilizer in $G$ as $Stab_G(S) = \{ g|g^{-1}sg \in S, s \in S, g \in G \}$.

A group is called an FC-group if conjugacy classes of all its elements are finite \cite{Tom84}. The~subgroup $H \leq G$ containing all the elements with finitely many conjugates is the FC-center of $G$. For example, the FC-center of a space group is its maximal translation subgroup.

If $\Gamma$ is a graph, it is said to be \emph{locally finite} if valencies of all its vertices are finite. The vertex set of $\Gamma$ is denoted by $V(\Gamma)$. The distance between vertices $x$ and $y$ in a (connected) graph, $d(x, y)$, is the number of edges in a shortest path (a geodesic) connecting them. The neighbourhood of vertex~$x$ is defined as $\{y|d(x, y) = 1\}$. The automorphisms of $\Gamma$ are understood as adjacency-preserving permutations on $V(\Gamma)$. The group of all automorphisms of $\Gamma$ is denoted by $Aut(\Gamma)$. For $G \leq Aut(\Gamma)$ the stabilizer of a vertex $x$ is defined as usual as $Stab_G(x) = \{g|x=xg, g \in G\}$. If $\Gamma$ is a vertex-transitive graph, then a group $G \leq Aut(\Gamma)$ acting \emph{freely} and \emph{transitively} on $V(\Gamma)$ is called a \emph{regular} group of automorphisms. 

Given a group $G$ and its finite inverse-closed generating set $S = S^{-1} \subset G \setminus \{1_G\}$, recall that a \emph{Cayley graph} of $G$ with respect to $S$ is a simple unidrected graph whose vertices are elements of $G$, and vertices $(g, h)$ are connected by an edge if (and only if) $g^{-1}h \in S$. The action of $G$ on itself by left multiplication induces a regular automorphism group of a Cayley graph \cite{Biggs93}.

An automorphism $g \in Aut(\Gamma)$ is termed \emph{bounded} if there is a constant $C < \infty$ (depending on $g$) such that $d(x, xg) < C$ holds for every $x \in V(\Gamma)$. For Cayley graphs of groups (when $V(\Gamma) = G$), this is equivalent to saying that the conjugacy class of $g$ is finite: since $d(x, gx) = d(e, x^{-1}gx) < C$, the latter holds, if and only if $\{x^{-1}gx, x \in G\}$ is finite ($e=xx^{-1}$). Bounded automorphisms form a normal subgroup $B(\Gamma)$ of $Aut(\Gamma)$ \cite{Seif89all, Seif91}.

The graphs which arise in the context of our paper are $r$-periodic graphs. They are simple undirected graphs with a finite number of vertex- and edge-orbits under the free action of $\Z^r$ ($1 \leq r \leq d$) \cite{Delgado2004}. Translations are their bounded automorphisms of infinite order. This in turn implies that vertex neighbourhoods in a graph, $B(x, n) = \{y|d(x, y) \leq n\}$, grow polynomially: $|B(x, n)| \leq c(x)n^r$ for every $n \in \N$ where $c(x)$ is a constant that becomes independent of $x$ for vertex-transitive graphs \cite{Gods80, Seif97}.

When discussing piecewise linear embeddings of $r$-periodic graphs in $\R^3$, we assume that vertices are mapped to points in $\R^3$, and  edges to straight-line segments. The edges intersect at most at common vertices. Two embeddings are called isotopic if (speaking informally) they can be continuously deformed one into another without breaking edges avoiding also edge intersections across the deformation path. In many cases the graphs we consider are contact graphs of sphere packings (whose vertices are spheres and edges correspond to sphere contacts) which we shall call \emph{sphere-packing graphs} as common in crystallography.

\section{Subperiodic groups in $\R^3$ -- isomorphism classes}

\subsection{Rod groups} \label{rod1}

Table~\ref{t:table1} provides a classification of rod groups into 32 isomorphism classes along with some information on the subgroup structure. Each row corresponds to one isomorphism class, groups are designated following \cite{ITE}.  The relatively simple structure of rod groups often permits to prove group isomorphism by hand, especially if the groups to be compared can be factorized as direct products:

\begin{center}
\begin{tabular}{l}
$p\bar3m1 = p\bar1 \times C_{3v}$ \\ 
$p\bar6m2 = p11m \times C_{3v}$ \\
$p6_3/mmc = p112_1/m \times C_{3v}$,
\end{tabular}
\end{center}

\noindent where $p\bar1 \cong p11m \cong p112_1/m \cong D_{\infty}$, or if an isomorphic mapping of group generators can be easily seen. In many cases a convenient choice of generators can be made from the tabulation of subperiodic homogeneous sphere packings by Koch and Fischer (1978). For checking purposes group isomorphism was proven from group presentations as illustrated by the two characteristic examples below.

The most unexpected group isomorphism is found for the pair ($p4_2/m$, $p\bar42c$). Given the generators 

\begin{center}
\begin{tabular}{l}
$a = \bar4_z(000) : y, -x, -z$,\\
$b = m(x, y, -1/4) : x, y, -1/2-z$,\\
$c = 2(x, 0, 1/4) : x, -y, 1/2-z$,
\end{tabular}
\end{center}

\noindent we have: $p4_2/m = \langle a, b \rangle$; $p\bar42c = \langle a, c \rangle$. Group presentations on these generators are identical (\emph{cf.} Fig.~\ref{f:p42m} below):

\begin{center}
\begin{tabular}{l}
$\langle a, b  | a^4, b^2, (aba)^2 \rangle \cong \langle a, c | a^4, c^2, (aca)^2 \rangle$.
\end{tabular}
\end{center}

\noindent We have checked that `analogous' isomorphism does not hold for groups containing principal axes of higher order [\emph{e.g.} ($p6_3/m, p\bar6c2$), ($p8_4/m, p\bar8c2$) \emph{etc.}].

\begin{center}
\begin{figure}[h]
\centering
\includegraphics{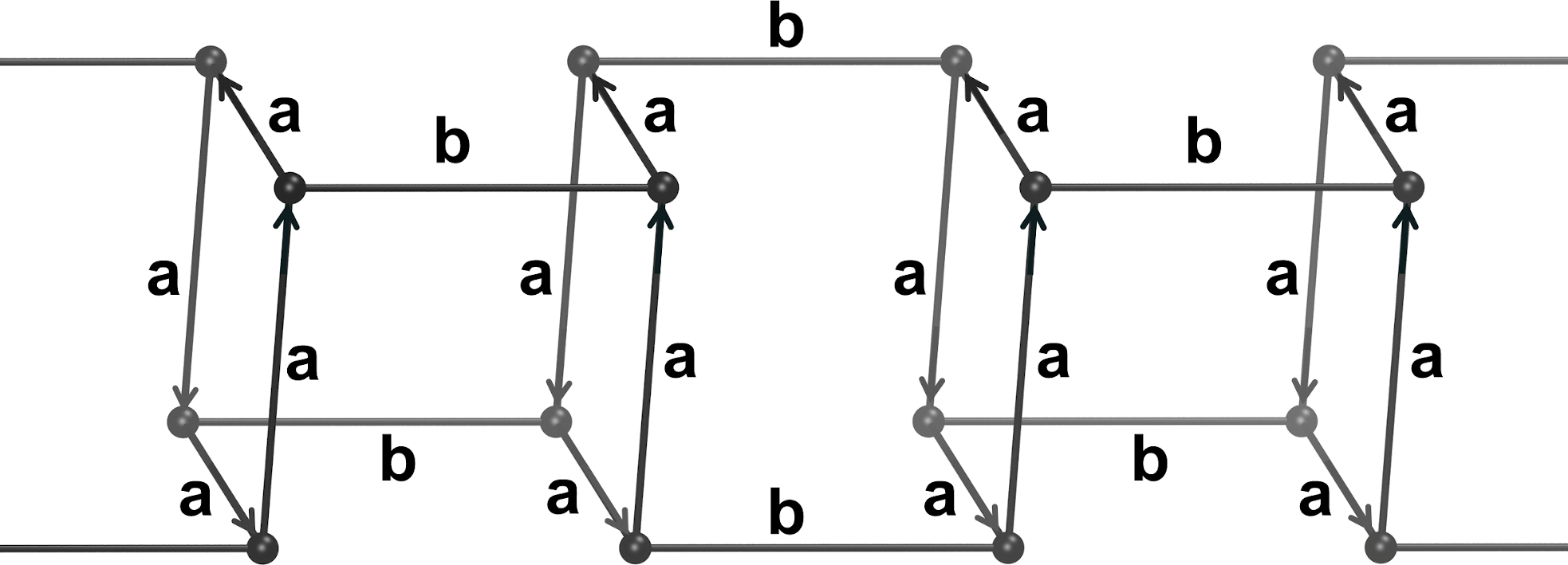}
\caption{A Cayley graph for the group defined by the presentation $\langle a, b|a^4, b^2, (aba)^2 \rangle$ (arrows distinguish between $a$~and~$a^{-1}$). See the text for the edge labels.}
\label{f:p42m}
\vspace{-2em}
\end{figure}
\end{center}

The pair ($p4_2/m$, $p\bar42c$) represents an example where distinct affine realisations of an abstract group act on the same embedding of the corresponding Cayley graph in $\R^3$ (Fig.~\ref{f:p42m}). The pairs of isomorphic groups ($p222, p4_222$), ($p321$, $p6_322$) correspond to a different situation that we shall discuss in a more general setting. Consider the following symmetry operations given below by the Seitz symbols [where $R_{\alpha}(\varphi)$ is a shorthand for the counterclockwise rotation by $2\pi/n\, (n \in \N)$ about the  axis $\alpha$]:


\begin{center}
\begin{tabular}{l}
$a = \{R_z(\varphi)|000\}, b = \{R_x(\pi)|000\}, c = \{R_x(\pi)|001\}$, \\
$d = \{R_x(\pi)R_z(\varphi)|001\}, f = \{R_x(\pi)R_z(\varphi/2)|00\frac{1}{2}\}$.
\end{tabular}
\end{center}

\noindent Presentations for the two families of groups are as follows ($n \geq 2$)\footnote{For simplicity we write here the same symbol $pn21$, be $n$ even or odd.}: 

\begin{center}
\begin{tabular}{l}
$H_1 = pn21 = \langle a, b, c \rangle = \langle a, b, d \rangle$,\\
$H_2 = p(2n)_n22 = \langle a, b, f \rangle$, \\
$H_1 \cong H_2 \cong \langle a, b, s | a^n, b^2, s^2, (as)^2, (bs)^2 \rangle \, (s=c, d, f)$.
\end{tabular}
\end{center}

\noindent Cayley graphs for these generating sets are isomorphic to quotients of the square lattice with respect to translation subgroup $\langle t_{y}^{n} \rangle$ which we shall denote (following \cite{KF78}) by  $\bf 4^4(0, n)$. Although isomorphic, they show up remarkable differences in their embeddings in $\R^3$: with symmetry $\langle a, b, c \rangle$ `infinite vertical edges' of the graph are disjoint, whereas with symmetry $\langle a, b, d \rangle$ and $\langle a, b, f \rangle$ they are intervowen giving rise to the $n$-tuple helices (Fig.~\ref{f:3tubes}). As a consequence, the two embeddings are non-isotopic. This also manisfests itself in the type of a group-subgroup relation between the symmetry of the graph and the stabilizer of an `infinite vertical edge': the latter is a subgroup of index $n$ in both $H_1$ and $H_2$. However, $S_1 = \langle b, c\rangle \cong p211$ is a \emph{translationengleiche} subgroup of $H_1$ while $S_2 = \langle b, d\rangle \cong \langle b, f \rangle \cong pn_122$  is a \emph{klassengleiche} subgroup of $H_2$ with $n$ times translation period in the $z$-direction. Therefore, cosets of $T(S_2)$ in $T(H_2)$ map individual helices onto each other. A closer look at the embeddings of multiple helices with symmetry $p(2n)_n22$ has shown that this symmetry is optimal in the following sense: it yields vertex-transitive embeddings where edges correspond to shortest equal inter-vertex distances (`bonds'), whereas vertices of different helices could be moved sufficiently far apart. The relevance of such embeddings to crystal structure design has been discussed recently by O'Keeffe and Treacy (2021) \cite{OK21}. 

\medskip

\noindent \emph{Remark 1}. The same reasoning applies to generating symmetries of some 1-periodic sphere packings from Koch and Fischer (1978) [designated by $\bf 6^3(0,2)$, $\bf 6^3(4, 2)$, $\bf 4^4(0, 2)$, $\bf 4^4(0, 3)$, $\bf 4^4(0, 4)$] that give rise to non-isotopic embeddings of sphere-packing graphs -- the aspect that remained unnoticed in their paper. For example, for the type $\bf 4^4(0,3)$ they tabulated the generating sets (in the above notation) $\{a, b, c\}$ and $\{a, b, f\}$ for rod groups $p321$ and $p6_322$, resp. Our numerical checks have shown that the set $\{a, b, d\}$ cannot produce a sphere packing of contact number~4. At first sight this discussion might seem as a pure exercise in geometry and group theory. Nevertheless, it appears to be relevant for certain conformations of nanotubes characterised by \emph{intrinsic twist} \cite{Twist2012}. The latter implies that energetically favourable configuration is in general different from that obtained by the geometrical rolling procedure \cite{Evar09} that excludes structures like those of symmetry $p8_422$ and $p6_222$ in Fig.~\ref{f:3tubes}. 

\begin{center}
\begin{figure}[ht]
\centering
\includegraphics{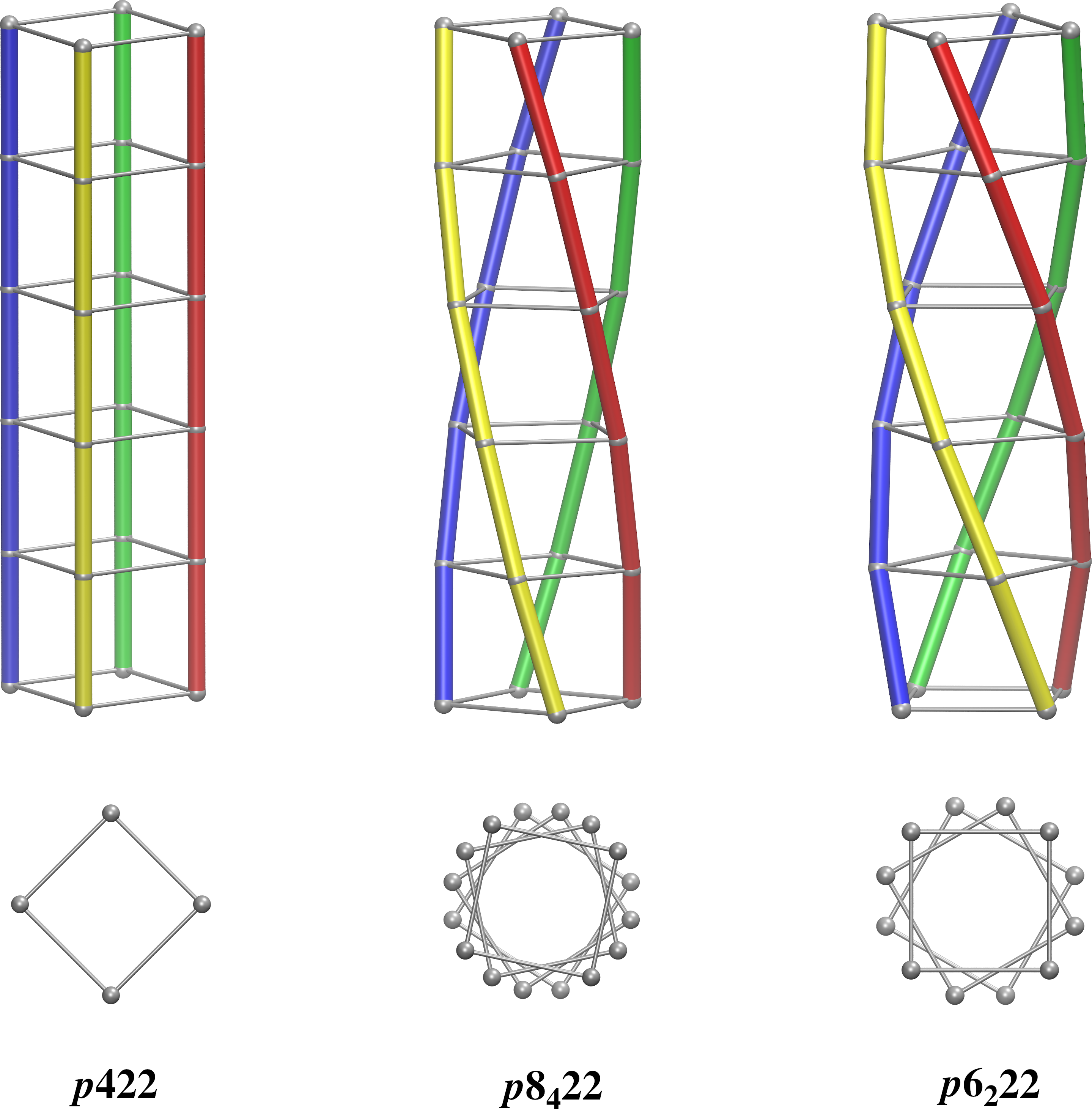}
\caption{Embeddings of the $\bf 4^4(0, 4)$ graph with different symmetry (side and top views).}
\label{f:3tubes}
\vspace{-2em}
\end{figure}
\end{center}

\noindent \emph{Remark 2}. Let us explain how the hexagonal group  $p6_222$ may appear as the symmetry group of the `intrinsically tetragonal' graph $\bf 4^4(0,4)$. Note that $p4_222$ is one of its regular groups. Owing to the isomorphism with $p4_222$, the group $p6_222$ is also plausible (Table~\ref{t:table1}). Consider the following group presentation:

\begin{center}
\begin{tabular}{l}
$G = \langle a, b, c, d | a^2, b^2, c^2, d^2, (cd)^2, dacb \rangle$; \\
$p4_222 = \langle 2(xx\frac{1}{4}), 2(xx\frac{\bar1}{4}), 2(x00), 2(0y0)\rangle$; \\
$p6_222 = \langle 2(x00), 2(x\bar{x}\frac{1}{3}), 2(0y\frac{1}{6}), 2(2x, x, \frac{1}{6})\rangle$. 
\end{tabular}
\end{center}

\noindent Here again, $H=\langle a, b\rangle$ is the \emph{translationengleiche} subgroup of index~4 for $p4_222$ ($H \cong p121$ and therefore it corresponds to disjoint `vertical edges'), whereas that for $p6_222$, $H \cong p6_522$, is the \emph{klassengleiche} subgroup of index~4. Hence, the embeddings with symmetry $p8_422$ and $p6_222$ are clearly isotopic (Fig.~\ref{f:3tubes}), so are those of symmetry $p422$ and $p4_222$.

{\tiny
\begin{table}[H]
\begin{center}
\caption{Isomorphism classes of rod groups}
\label{t:table1}
\begin{tabular}{|l|c|l|}
\hline
Groups & Direct product & FC-center \\
\hline
$p1$, $pc11$, $p112_1$, $p4_{1,3}$, $p3_{1,2}$, $p6_{1,5}$ & $C_{\infty}$ & $C_{\infty}$ \\
\hline
$p\bar1, p211, p2/c11, p11m, p112_1/m$  & $D_{\infty}$ & $C_{\infty}$ \\
$p222_1, p2cm, p4_{1,3}22, p3_{1,2}21, p6_{1,5}22$ & & \\
\hline
$pm11$, $p112$, $pcc2$, $pmc2_1$, $p4_2$, $p6_{2,4}$ & $C_{\infty} \times C_2$ & $C_{\infty} \times C_2$ \\
\hline
$p2/m11, p112/m, p222, p2mm$ & $D_{\infty} \times C_2$ & $C_{\infty} \times C_2$ \\
$pccm, pmcm, p4_222, p6_{2,4}22$ & & \\
\hline
$p3, p6_3$ & $C_{\infty} \times C_3$ & $C_{\infty} \times C_3$\\
\hline
$p4$ & $C_{\infty} \times C_4$ & $C_{\infty} \times C_4$ \\
\hline
$p6$ & $C_{\infty} \times C_6$ & $C_{\infty} \times C_6$\\
\hline
$p3m1$, $p6_3mc$ & $C_{\infty} \times D_3$ & $C_{\infty} \times D_3$ \\
\hline
$p\bar3$, $p\bar6$, $p6_3/m$ & $D_{\infty} \times C_3$ & $C_{\infty} \times C_3$\\
\hline
$p4/m$ & $D_{\infty} \times C_4$ & $C_{\infty} \times C_4$ \\
\hline
$p6/m$ & $D_{\infty} \times C_6$ & $C_{\infty} \times C_6$ \\
\hline
$p\bar3m1$, $p\bar6m2$, $p6_3/mmc$ & $D_{\infty} \times D_3$ & $C_{\infty} \times D_3$ \\
\hline
$pmm2$ & $C_{\infty} \times D_2$ & $C_{\infty} \times D_2$ \\
\hline
$p4mm$ & $C_{\infty} \times D_4$ & $C_{\infty} \times D_4$ \\
\hline
$p6mm$ & $C_{\infty} \times D_6$ & $C_{\infty} \times D_6$ \\
\hline
$pmmm$ & $D_{\infty} \times D_2$ & $C_{\infty} \times D_2$ \\
\hline
$p4/mmm$ & $D_{\infty} \times D_4$ & $C_{\infty} \times D_4$ \\
\hline
$p6/mmm$ & $D_{\infty} \times D_6$ & $C_{\infty} \times D_6$\\
\hline
$p\bar4$ &  -- & $C_{\infty} \times C_2$ \\
\hline
$p4_2/m$, $p\bar42c$ & -- & $C_{\infty} \times C_2$ \\
\hline
$p\bar42m$ & -- & $C_{\infty} \times D_2$ \\
\hline
$p3c1$ & -- & $p3c1$ \\
\hline
$p4cc$ & -- & $p4cc$ \\
\hline
$p6cc$ & -- & $p6cc$ \\
\hline
$p4_2mc$ & -- & $p4_2mc$ \\
\hline
$p4_2/mmc$ & -- & $p4_2mc$ \\
\hline
$p\bar3c1$, $p\bar6c2$ & -- & $p3c1$ \\
\hline
$p321$, $p6_322$ & -- & $C_{\infty} \times C_3$ \\
\hline
$p422$ & -- & $C_{\infty} \times C_4$ \\
\hline
$p622$ & -- & $C_{\infty} \times C_6$ \\
\hline
$p4/mcc$ & -- & $p4cc$ \\
\hline
$p6/mcc$ & -- & $p6cc$ \\
\hline
\end{tabular}
\end{center}
\end{table}
}

\subsubsection{Rod groups which are FC-groups}

In Table~\ref{t:table1} rod groups are characterized by FC-centers. For a Cayley graph $\Gamma$ of a group~$G$ we have: $F(G) = G \cap B(\Gamma)$ where $F(G)$ is the FC-center of $G$ and $B(\Gamma)$ is the group of all bounded automorphisms. Some rod groups are their own FC-centers. Apart from abelian groups, infinite FC-groups are very rare in crystallography. A group $G$ is an FC-group if its \emph{commutator subgroup} $G'$ is finite: in this case every element $g \in G$ has indeed finitely many conjugates: $\{h^{-1}gh = g[g, h], h \in G\}$ where $[g, h]=g^{-1}h^{-1}gh \in G'$ \cite{Tom84}. From a crystallographic point of view it is not surprising that all such rod groups are polar, and for them the finiteness of conjugacy classes can be also confirmed from symmetry diagrams \cite{ITE}. 

Cayley graphs of FC-groups provide examples of graphs with \emph{vertex-transitive groups of bounded automorphisms} \cite{Trofimov83, Seif91}, and from this point of view they are generalizations of translation lattices. In this case bounded automorphisms of finite order are realised by rotation(s) about the principal axis and by vertical mirrors, the latter act (informally speaking) `in the directions of missing periodicity'. Bounded automorphisms of infinite order correspond to translations, screw rotations and glide reflections (that are necessarily vertical in rod groups). Moreover, Cayley graphs of FC-groups on one hand, and of $\Z^n$ on the other, have the following characteristic property in common. First observe that in Cayley graphs of translation lattices (or abelian groups in general) the set of elements $\{g | d(x, xg) = 1, g \in G \}$ is the same for any vertex~$x$. In a more general case of FC-groups, a given generating set $S$ can be always extended in such a way that a larger, though finite, generating set $\bar{S}$ is obtained that is invariant under conjugation by any element of the group: $\bar{S} = \{ g^{-1}sg| s \in S, g \in G \}$. In other words, the set $\{s | d(x, xs) = 1, s \in \bar{S} \}$ becomes independent of vertex $x$. 

\medskip

\noindent \emph{Example:} Consider the FC-group $G = p4cc = \langle s_1, s_2 \rangle$ where $s_1 = \{m_{xxz}|00\frac{1}{2}\}$ and $s_2 = \{m_{yz}|00\frac{1}{2}\}$. The Cayley graph for this generating set is a square lattice rolled along the vector $(4, 4)$. The conjugacy classes of elements obviously correspond one-to-one to those of $G/T(G) \cong D_4$. We have: $S = \{ s_1, s_2\}; \bar{S} = \tilde{S} \cup \tilde{S}^{-1}$, where $\tilde{S} = \{s_1, s_2, s_2s_1s_2, s_1s_2s_1 \}$. The Cayley graph for $\bar{S}$ has valency 8, and therefore does not correspond to a quotient of any planar vertex-transitive graph.

\begin{center}
\begin{figure}[H]
\centering
\includegraphics{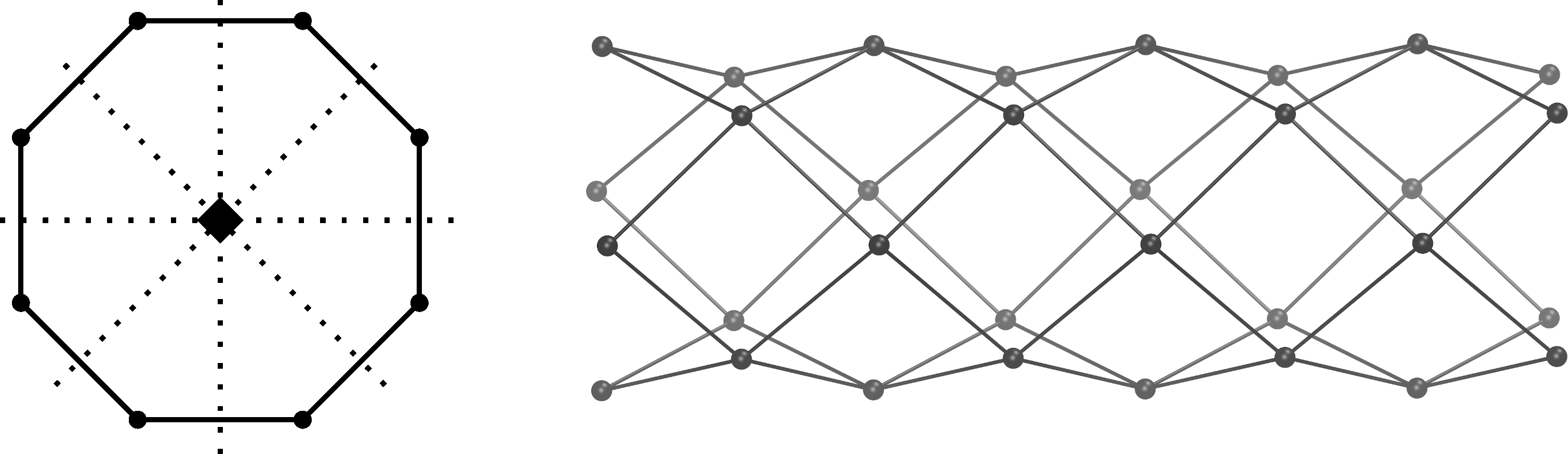}
\caption{Square lattice rolled along the vector $(4, 4)$ (top and side view). The top view is superposed with the symmetry elements of the rod group $p4cc$.}
\label{f:sql44}
\end{figure}
\vspace{-2em}
\end{center}

\subsection{Layer groups}

In Table~\ref{t:table2} layer groups are classified into 34 isomorphism types. This classification parallels that of two-dimensional space groups (\emph{wallpaper groups}). In group-theoretic terms, a layer group is either isomorphic to a wallpaper group, or is a direct product of a wallpaper group and the cyclic group of order~2 (hence the total number of types $34=17+17$). In the first case a layer group is uniquely related to the group-subgroup pair $G/H$ of wallpaper groups with index~2. More specifically, such a layer group which is also referred to as the symmetry group of a two-sided plane, can be constructed by lifting the symmetry operators of $G$ to the $xy$-plane in $\R^3$, and choosing a coset representative of $H$ in $G$ so as it maps opposite sides of the plane onto each other. Group isomorphism between such a layer group and the original wallpaper group $G$ can be verified directly from group presentations as illustrated by Baburin (2026) and Souvignier (2026) \cite{Souv26} in connection with regular groups of the $6^3$-tessellation of the Euclidean plane. The other seventeen layer groups are central extensions of wallpaper groups by the cyclic group of order~2 that is generated by a horisontal mirror reflection. 

Being in principle simple, the classifiation in Table~\ref{t:table2} seems not to have appeared in the literature before, or anyway the aspect of (abstract) group isomorphism was not emphasized. Some remarks on group isomorphism for subperiodic groups are contained in the papers by Fischer and Koch \cite{KF75, KF78b} who did not go, however, for a full classification beyond a few examples. Very recently, layer groups have been described in terms of orbifolds but again the aspect of group isomorphism was not considered \cite{layer25}.

{\tiny
\begin{table}[t]
\begin{center}
\caption{Isomorphism classes of layer groups}
\label{t:table2}
\begin{tabular}{|l|l|l|l|}
\hline
Type & Groups & Type & Groups\\
\hline
$p1$ & $p1, p11b$ & $p1 \times C_2$ & $p11m$ \\
$p2$ & $p112, p\bar1, p112/b$ & $p2 \times C_2$ & $p112/m$ \\
$pm$ & $p1m1, p121, p2_1ma, p2aa, p2mb, cm2a$ & $pm \times C_2$ & $pm2m$ \\
$pg$ & $p1a1, p12_11, p2_1ab$ & $pg \times C_2$ & $p2_1am$ \\
$cm$ & $c1m1, c121, p2_1mn, p2an$ & $cm \times C_2$ & $cm2m$ \\
$p2mm$ & $pmm2, p12/m1, p222, pmaa, pmma, cmma$ & $p2mm \times C_2$ & $pmmm$ \\
$p2mg$ & $pma2, p12_1/m1, p12/a1, p2_122, pbaa, pmab$ & $p2mg \times C_2$ & $pmam$ \\
$p2gg$ & $pba2, p12_1/a1, p2_12_12$ & $p2gg \times C_2$ & $pbam$ \\
$c2mm$ & $cmm2, c2/m, c222, pban, pbmn, pmmn$ & $c2mm \times C_2$ & $cmmm$ \\
$p4$ & $p4, p\bar4, p4/n$ & $p4 \times C_2$ & $p4/m$ \\
$p4mm$ & $p4mm, p422, p\bar42m, p\bar4m2, p4/nbm, p4/nmm$ & $p4mm \times C_2$ & $p4/mmm$ \\
$p4gm$ & $p4bm, p42_12, p\bar42_1m, p\bar4b2$ & $p4gm \times C_2$ & $p4/mbm$ \\
$p3$ & $p3$ & $p3 \times C_2$ & $p\bar6$ \\
$p3m1$ & $p3m1, p312$ & $p3m1 \times C_2$ & $p\bar6m2$ \\
$p31m$ & $p31m, p321$ & $p31m \times C_2$ & $p\bar62m$ \\
$p6$ & $p6, p\bar3$ & $p6 \times C_2$ & $p6/m$ \\
$p6mm$ & $p6mm, p\bar3m1, p\bar31m, p622$ & $p6mm \times C_2$ & $p6/mmm$ \\
\hline
\end{tabular}
\end{center}
\vspace{-1em}
\end{table}
}

The abelian groups in Table~\ref{t:table2} are $p1, p11b$ and $p11m$ which are also the only FC-groups among layer groups. Since the horizontal mirror is a central element, it induces the bounded automorphism of order 2 in respective vertex-transitive graphs \cite{Gromov83}. The isomorphism between $p1$ and $p11b$ is noteworthy from a crystallographic point view.

As an application, let us point out the relevance of layer groups to non-isotopic embeddings of 2-periodic non-planar graphs in $\R^3$ (\emph{cf.} also \cite{Bab26}). Consider the {\bf{KIa}} graph \cite{KF78} that often arises in crystallography in diffent contexts. Its automorphism group, $Aut(\Gamma)$, is isomorphic to wallpaper group $p4mm$. By group isomorphism, and by considering also the vertex stabilizer, it can be shown that there are exactly two layer groups for its maximum-symmetry embeddings: $p\bar4m2$ and $p4/nbm$ (Fig.~\ref{f:klee}). The {\bf{KIa}} graph is built from two sets of chains which lie separately in parallel horisontal planes in the first case, and are interwoven in the other. Interestingly, the first ever occurrence of this graph in crystal structures (CSD code XAKCIV, sp.gr. $Pnna$) corresponds to the interwoven variant with symmetry $p2an$ (that is an index-4 subgroup of $p4/nbm$) (see Fig.~51 in \cite{Proser03}).

\begin{center}
\begin{figure}[h]
\centering
\includegraphics{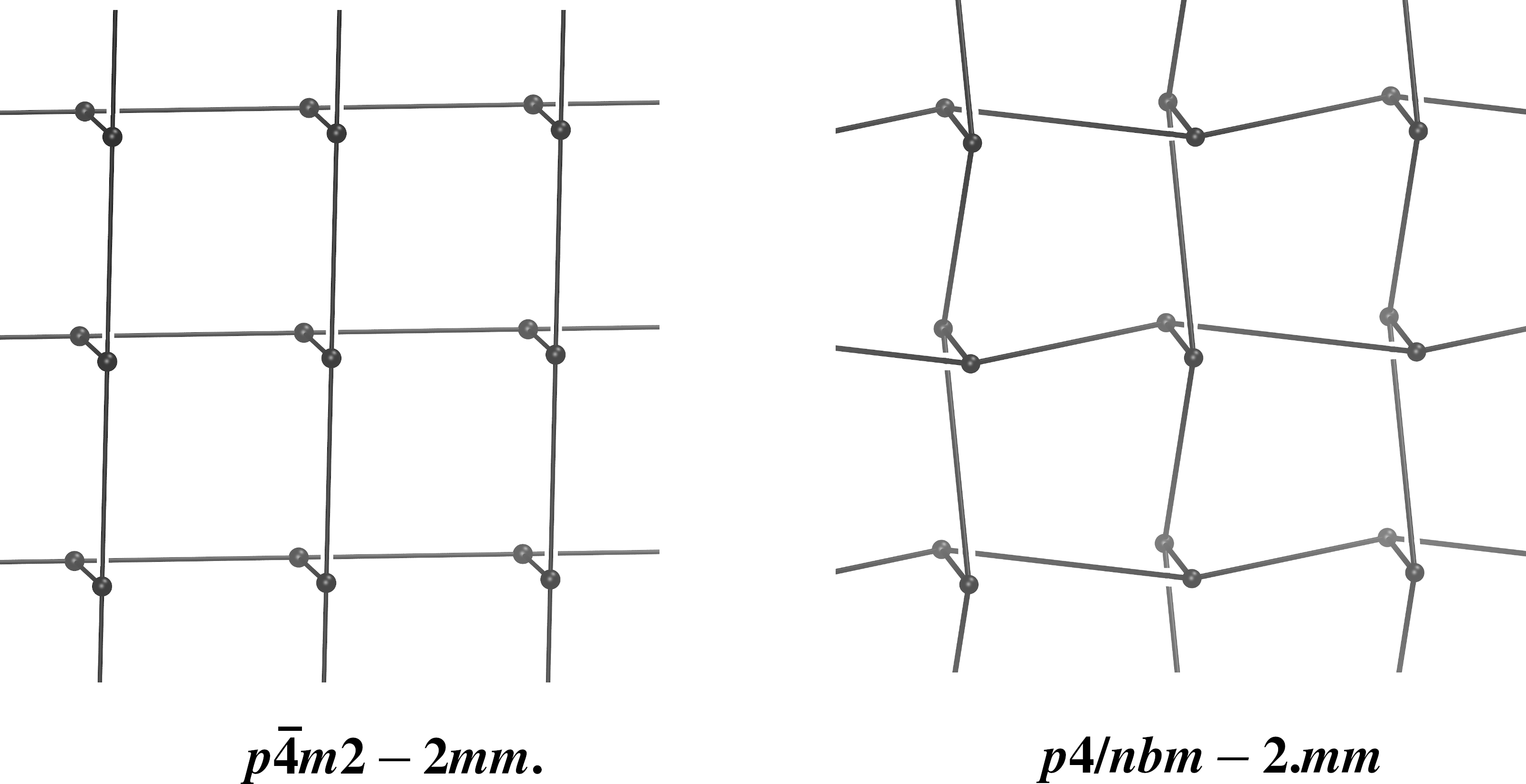}
\caption{Embeddings of the {\bf{KIa}} graph with different layer-group symmetry.}
\label{f:klee}
\end{figure}
\vspace{-1em}
\end{center}

\subsection{Invariants for subperiodic groups}

Following the ideas of the \textsc{Small Groups} library \cite{Besche2002} (as implemented in $\GAP$ \cite{GAP}), a practical approach for classifying groups consists in developing a set of invariants that is unique for a given isomorphism class, and a case-by-case consideration of groups sharing the same (finite sets of) invariants. It has to be emphasized that such invariants allow to distinguish groups within a given finite set. For rod groups and layer groups, the invariants can be chosen particularly simple. Every group is characterized by a `fingerprint' that is an ordered list of 2-tuples [($m_1,n_1$), ... ($m_k, n_k$)]. The $kth$ tuple ($m_k, n_k$) refers to the number $m_k$ of conjugacy classes of subgroups of index $k+1$, and the number $n_k$ of normal subgroups of the same index. Experiment suggests that the maximal index to be considered is 12 for rod groups and 8 for layer groups, respectively. We emphasize that these invariants (deposited in the Supporting information) can be efficiently obtained from finite presentations using standard tools of computational group theory. The algorithms for identifying wallpaper groups proposed in the literature are usually oriented towards matrix groups and involve a more elaborate case-by-case analysis of the group structure (as done in \cite{Jones11}) or an explicit computation of a conjugating matrix given the `reference' list of groups \cite{Cryst2019}. 

The importance of considering normal subgroups can be clarified by the following example. Wallpaper groups $cm$ and $p2gg$ have the same number of conjugacy classes of subgroups up to index~11. Taking into account the number of normal subgroups allows to distiguish them starting from index-3 subgroups:

\begin{center}
\begin{tabular}{l}
$cm: \, \, \, \, [ (3, 3), (2, 1), (7, 3), (2, 1), (7, 4), (2, 1), (13, 5), ... ]$;\\
$p2gg:  [ (3, 3), (2, 0), (7, 3), (2, 0), (7, 2), (2, 0), (13, 3), ... ]$.
\end{tabular}
\end{center}

\section{Automorphisms of Cayley graphs from group actions} 

\subsection{Theory} \label{auts}

We begin with the fundamental result on Cayley graph automorphisms induced by the conjugation action of a group.

\begin{prop}
\label{prop1}
\emph{(Biggs, 1993, ch.~16)}. Let $\Gamma$ be a Cayley graph of a group $G$ corresponding to the generating set $S=S^{-1}$. The element $h \in Aut(G)$ such that $h^{-1}Sh = S$ (i.e., $h$ stabilizes $S$ setwise by conjugation) induces an automorphism of $\Gamma$ fixing the vertex representing the identity of $G$ (centralizing elements give rise to trivial automorphisms).
\end{prop}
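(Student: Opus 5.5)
The plan is to work directly with the definition of the Cayley graph recalled in Section~2: vertices are the elements of $G$, and $g,k$ are adjacent if and only if $g^{-1}k\in S$. I read $h\in Aut(G)$ as a group automorphism $\phi_h\colon G\to G$, written for conjugation as $g\mapsto h^{-1}gh$ when $h$ comes from an ambient group. The hypothesis $h^{-1}Sh=S$ then means $\phi_h(S)=S$. The candidate graph automorphism is simply the vertex permutation $g\mapsto \phi_h(g)$.

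The steps go in this order. First, $\phi_h$ is a bijection of $G$, hence a permutation of $V(\Gamma)$. Second, adjacency is preserved: if $g^{-1}k=s\in S$, then since $\phi_h$ is a homomorphism,
$$\phi_h(g)^{-1}\phi_h(k)=\phi_h(g^{-1}k)=\phi_h(s)\in S.$$
Third, the same argument applied to $\phi_h^{-1}$ gives the converse. We need $\phi_h^{-1}(S)=S$, and this follows from $\phi_h(S)=S$ together with bijectivity. So non-adjacency is preserved as well, and $\phi_h\in Aut(\Gamma)$. Fourth, any homomorphism fixes the identity, so $\phi_h(1_G)=1_G$, and the induced automorphism fixes the vertex representing the identity. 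Inverse-closedness of $S$ is not needed for the argument; it only guarantees that $\Gamma$ is an undirected simple graph, which the definition already assumes.

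For the parenthetical remark, suppose $h$ centralizes $G$, i.e. $h^{-1}gh=g$ for all $g\in G$. Then $\phi_h$ is the identity map on $V(\Gamma)$, so the induced automorphism is trivial. More generally, $h$ and $hc$ with $c\in C(G)$ induce the same automorphism. Hence the map $h\mapsto\phi_h$ factors through the quotient by the centralizer.

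No step here is a real obstacle; the whole argument is a one-line homomorphism computation. The only point needing care is the notational one of interpreting ``$h\in Aut(G)$ with $h^{-1}Sh=S$'' as either an abstract automorphism or conjugation by an element of an overgroup, and I will state the proof for an arbitrary automorphism $\phi$ with $\phi(S)=S$, which covers both readings. It is also worth noting, for later use with bounded automorphisms, that inner automorphisms $\phi_h$ with $h\in G$ differ from the left-regular action of $h^{-1}$ by the right multiplication by $h$. This is consistent with $d(x,xg)$ being controlled by conjugacy classes, as recalled in Section~2.
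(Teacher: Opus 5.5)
Your proof is correct and is the standard argument from the cited source (Biggs, ch.~16). The paper gives no proof of its own and only cites Biggs. The computation $\phi(g)^{-1}\phi(k)=\phi(g^{-1}k)\in S$ together with bijectivity, $\phi(S)=S$, $\phi(1_G)=1_G$, and the observation that conjugation by a centralizing element is the identity map is all that is needed.
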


Let us emphasize that already the action of $G$ on itself by conjugation allows us to find additional automorphisms of $\Gamma$ which are not left shifts of $G$. This is because $S$ can be stabilized by a non-trivial subgroup of $G$, although $G$ is regular on vertices of $\Gamma$ (Table~\ref{t:table3}). Everywhere in the following the stabilizer of $S$ refers to the inversed-closed generating set ($S \cup S^{-1}$).

{\tiny
\begin{table}[t]
\caption{Stabilizers of generating sets for some Cayley graphs}
\label{t:table3}
\vspace{0.3em}
\small \raggedright \hspace{3.2em} `--' in the last column refers to trivial $Z(G)$.\\
\vspace{-1em}
\begin{center}
\begin{tabular}{|l|l|l|l|l|}
\hline
Graph & Gen. set $S$ & $G$ & $Stab_G(S)$ & $Stab \in Z(G)$ \\
\hline
$K_{3,3}$  &  $(1, 2), (1, 3), (2, 3)$ & $D_3$ & $D_3$ & -- \\
\hline
\rule{0pt}{2.5ex} trunc. octahedron & $m_{xxz}, m_{x\bar{x}z}, m_{xyy}$ & $\bar43m$ & $C_2 = \langle 2(x00)\rangle$ & -- \\
& $4(00z), 2(x0x)$ & $432$ & $C_2 = \langle 2(x00)\rangle$ & -- \\
\hline
\rule{0pt}{2.5ex} rhombocuboctahedron & $3(xxx), m_{xz}, m_{yz}$ & $m\bar3$ & $C_i = \langle \bar1(000) \rangle$ & yes \\
& $4(00z), 3(xxx)$ & $432$ & $C_2 = \langle 2(x\bar{x}0)\rangle$ & -- \\
\hline
\rule{0pt}{2.5ex} trunc. tetrahedron & $3(xxx), 2(00z)$ & $23$ & $C_1$ & -- \\
\hline
\rule{0pt}{2.5ex} icosahedron & $3(xxx), 3(xx\bar{x}), 2(x00)$ & $23$ & $C_2 = \langle 2(00z)\rangle$ & -- \\
\hline
\rule{0pt}{2.5ex} $\bf 4^4(0, 3)$ & $3(00z), 2(x00), 2(x0\frac{1}{2})$ & $p321$ & $C_1$ & -- \\
\hline
\rule{0pt}{2.5ex} $\bf 4^4(0, 4)$ & $\bar4_z(00\frac{1}{4}), 2(x00), 2(x0\frac{1}{2})$ & $p\bar42c$ & $C_2 = \langle 2(00z)\rangle$ & yes \\
\hline
\rule{0pt}{2.5ex} $3/10/c1$ ({\bf srs}) & $4_3(\frac{1}{4}\frac{1}{4}z), 2(00z)$ & $I4_1$ & $p1 = \langle t_z \rangle$ & yes \\
\hline
\rule{0pt}{2.5ex} $4/6/c1$ ({\bf dia}) & $\bar1(000), 2(0y\frac{1}{4}), b(\frac{1}{4}yz)$  & $Pbcn$ & $C_1$ & -- \\
\hline
\rule{0pt}{2.5ex} $4/6/t4$ ({\bf cds}) & $4_3(\frac{1}{4}\frac{1}{4}z), b(\frac{1}{4}yz)$  & $I4_1cd$ & $p2_1 = \langle 2_1(\frac{1}{4}\frac{1}{4}z) \rangle$ & no \\
\hline
\rule{0pt}{2.5ex} $6/4/c1$ ({\bf pcu}) & $4_1(00z), t_x, t_y$  & $P4_1$ & $p4_1 = \langle 4_1(00z) \rangle$ & no \\
\rule{0pt}{2.5ex} &  $4(00z), 4(\frac{1}{2}\frac{1}{2}z), c(xxz)$ & $P4cc$ & $pc = \langle c(xxz) \rangle$ & no \\
\hline

\end{tabular}
\end{center}
\end{table}
}

\medskip

\noindent \emph{Examples}: (a) Following Biggs (1993), construct a Cayley graph $\Gamma$ for $D_3$ given the generating set of three involutions $\{ (1, 2), (1, 3), (2, 3) \}$. Let us determine additional automorphisms of $\Gamma$ (that is isomorphic to $K_{3,3}$ -- see Fig.~\ref{f:K33}) from the conjugation action of $D_3$ on itself (below $v_i$ are vertices of $\Gamma$):

\begin{center}
\begin{tabular}{l}
$G = \{ (1), (1,2), (1,2,3), (2,3), (1,3,2), (1,3) \} \mapsto \{ v_1, v_2, v_3, v_4, v_5, v_6 \}$;\\
$G^{(1, 2)} = \{ (1), (1,2), (1,3,2), (1,3), (1,2,3), (2,3) \} \mapsto \{ v_1, v_2, v_5, v_6, v_3, v_4 \} \cong (3, 5)(4, 6)$;\\
$G^{(1, 3)} = \{ (1), (2,3), (1,3,2), (1,2), (1,2,3), (1,3) \} \mapsto \{ v_1, v_4, v_5, v_2, v_3, v_6 \} \cong (2, 4)(3, 5)$.\\
\end{tabular}
\end{center}

\noindent As result, we have found a larger automorphism group $\ti{G}(\Gamma) \cong D_3 \times D_3$ of order 36 (note that $|Aut(\Gamma)|=72$):  

\begin{center}
\begin{tabular}{l}
$\ti{G}(\Gamma) = \langle (1,2)(3,4)(5,6), (1,4)(2,5)(3,6), (1,6)(2,3)(4,5), (3, 5)(4, 6), (2, 4)(3, 5) \rangle$.
\end{tabular}
\end{center}

\begin{center}
\begin{figure}[H]
\centering
\includegraphics{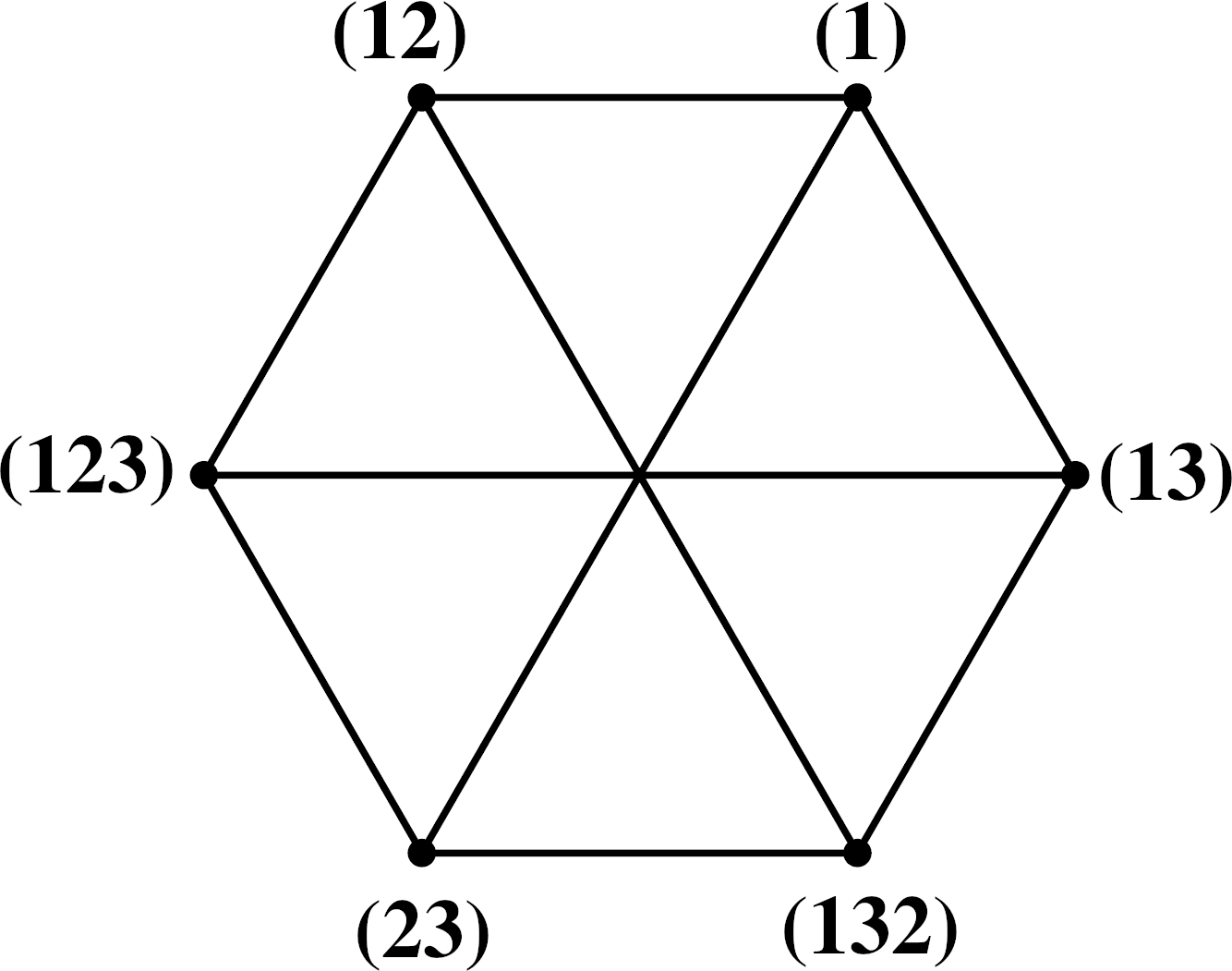}
\caption{$K_{3,3}$ as a Cayley graph for $D_3$.}
\label{f:K33}
\vspace{-2em}
\end{figure}
\end{center}

\noindent (b) Generating sets of polar space groups in Table~\ref{t:table3} are characterised by infinite stabilizers which are rod groups (in this case $Stab_G(S) \cap Z(G) \neq \{e\}$). Consider the $6/4/c1$ ({\bf pcu}) graph that is isomorphic to the three-dimensional cubic lattice. The fourfold screw rotation $f$ induces the following permutation of generators:

\begin{center}
\begin{tabular}{l}
$ \{ f, f^{-1}, t_x, t_{x}^{-1}, t_y, t_y^{-1} \} \mapsto \{ f, f^{-1},  t_y, t_y^{-1} , t_{x}^{-1}, t_x \}$;\\
$\hspace{3.65em} \{1, 2, 3, 4, 5, 6\} \mapsto \{1, 2, 5, 6, 4, 3\} \cong (3,5,4,6)$.\\
\end{tabular}
\end{center}

\noindent This permutation induces in turn an automorphism of the graph that is realised by the fourfold rotation $4(00z)$ that leads to the enhancement of the automorphism group from $P4_1$ to $P4$.

\medskip

For an arbitrary group $G$ there can be different `mechanisms' for the additional stabilization of generating sets (\emph{e.g.} \cite{Watk71}). The following proposition describes the situation that we shall encounter most often. 

\begin{prop}
\label{prop2}
Let $\Gamma$ be a vertex-transitive locally finite graph, and let $G < Aut(\Gamma)$ be a regular automorphism group of $\Gamma$. Assume that $C_{Aut(\Gamma)}(G)$ is nontrivial. Put $z \in C_{Aut(\Gamma)}(G)$. Then the generating set $S = \{g | d(x, xg) = 1, x \in V(\Gamma), g \in G \}$ of $G$ is stabilized by some $hz \in G$, where $h \in Stab_{Aut(\Gamma)}(x)$. 
\end{prop}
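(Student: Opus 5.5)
The plan is to construct the stabilizing element explicitly from regularity and then check the stabilization by a short distance computation. Throughout I use the paper's right-action convention $x \mapsto xg$, and I fix the base vertex $x$ in the definition of $S$. Then $S$ consists exactly of the elements of $G$ carrying $x$ to a neighbour of $x$. It is inverse-closed: if $d(x,xg)=1$, applying $g^{-1}$ gives $d(xg^{-1},x)=1$. Since $z$ centralizes $G$, it commutes with every $s \in S$.

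\emph{Step 1 (choice of the element).} Because $G$ acts regularly on $V(\Gamma)$, there is a unique $g_0 \in G$ with $xg_0 = xz$. Put $h = g_0 z^{-1} \in Aut(\Gamma)$, so that $g_0 = hz$. Then $xh = xg_0z^{-1} = xzz^{-1} = x$, hence $h \in Stab_{Aut(\Gamma)}(x)$. This gives an element of the required form $hz \in G$.

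\emph{Step 2 (where $x$ goes under $g_0^{-1}$).} From $xg_0 = xz$ and $zg_0 = g_0z$ we get
\[ x = xzg_0^{-1} = xg_0^{-1}z, \]
and therefore $xg_0^{-1} = xz^{-1}$.

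\emph{Step 3 (stabilization).} Let $s \in S$. The conjugate $g_0^{-1}sg_0$ lies in $G$, so it suffices to show that it moves $x$ to a neighbour. Since automorphisms preserve distance, applying $g_0^{-1}$ gives
\[ d(x,\, xg_0^{-1}sg_0) = d(xg_0^{-1},\, xg_0^{-1}s). \]
By Step 2 this equals $d(xz^{-1},\, xz^{-1}s)$. Because $z^{-1}$ commutes with $s$, we have $xz^{-1}s = xsz^{-1}$, and applying $z$ yields $d(x,xs) = 1$. Thus $g_0^{-1}Sg_0 \subseteq S$. The same argument with $g_0^{-1}$ (which satisfies $xg_0^{-1} = xz^{-1}$, with $z^{-1}$ also central) gives the reverse inclusion. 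Alternatively, equality follows because conjugation is injective and $S$ is finite by local finiteness. Hence $hz = g_0$ stabilizes $S$.

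The only delicate point is Step 2: one has to use the commutation of $z$ with $g_0$ itself, not just with the elements of $S$, to relocate $xg_0^{-1}$. One also has to keep the composition order consistent with the right-action convention. Beyond that, the argument is routine. I would add one remark: if $z \in G$, then $g_0 = z$ and $h = 1$, so the element obtained is central and its conjugation action on $S$ is trivial. This matches the parenthetical remark in Proposition~\ref{prop1}.
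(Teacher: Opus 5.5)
Your proof is correct, and it takes a genuinely different and more complete route than the paper's sketch.

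The paper fixes $h$ first. It takes some $h \in Stab_{Aut(\Gamma)}(x)$ with $h^{-1}Sh=S$, notes that $(hz)^{-1}S(hz)=h^{-1}Sh$, and concedes that $hz\in G$ cannot be guaranteed in general. It then adds extra hypotheses: $z\in Z(Aut(\Gamma))$, $h$ induces an automorphism of $G$, and $Stab_{Aut(\Gamma)}(x)$ is not self-normalizing. Under these it argues, via Wielandt's fact that the normalizer of a point stabilizer permutes that stabilizer's fixed points, together with transitivity of $G$, that a suitable $hz$ lies in $G$.

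You reverse the order of quantifiers. Given $z$, regularity singles out $g_0$ as the unique element of $G$ in the coset $Stab_{Aut(\Gamma)}(x)\,z$, so membership in $G$ holds by construction. Moreover, $h=g_0z^{-1}$ automatically normalizes $G$, because conjugation by $h$ and by $g_0$ agree on $G$; this is exactly the property the paper has to assume. Your argument therefore proves the proposition under precisely its stated hypotheses. It needs neither centrality of $z$ in $Aut(\Gamma)$ nor anything about the normalizer of the stabilizer. The paper's approach tries to tie the phenomenon to the normalizer structure of $Stab_{Aut(\Gamma)}(x)$, but as written it covers only a special case.

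One extra line would let your proof also deliver the nontrivial stabilization the paper is really after. Suppose $g_0\in Z(G)$. Then $h=g_0z^{-1}$ centralizes the transitive group $G$ and fixes $x$, so it fixes every vertex $xk$ with $k\in G$. Hence $h=1$ and $z=g_0\in G$. So for every $z\in C_{Aut(\Gamma)}(G)\setminus G$ your $g_0$ is non-central, and since $S$ generates $G$, conjugation by $g_0$ permutes $S$ nontrivially.
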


\begin{proof} Only a sketch of the proof is provided here.\\
\noindent 1) For the generating set $S$ of a regular group $G$ we have: $S = h^{-1}Sh$ for some $h \in Stab_{Aut(\Gamma)}(x)$. The element $hz$ stabilizes $S$ in the same way: $(hz)^{-1}S(hz) = h^{-1}Sh$. For a nontrivial stabilization of $S$, $h \neq 1$ and $z \notin G$ must hold. Whether such $hz$ is actually in $G$ cannot be guaranteed in general but may well be the case under more restrictive assumptions as introduced in the following. \\
2) Let $z \in Z(Aut(\Gamma)) \neq \{e\}$. Assume that some $h$ induces a group automorphism of $G$, and $Stab_{Aut(\Gamma)}(x)$ is not self-normalizing in $Aut(\Gamma)$. Then $hz \in N_{Aut(\Gamma)}(Stab_{Aut(\Gamma)}(x))$. The `normalizer of the stabilizer' permutes the vertices fixed by the stabilizer \cite{Wie64}. Since $G$ is transitive, there does exist such $hz \in G$ which maps fixed vertices of the stabilizer onto each other.
\end{proof}

\medskip

\noindent \emph{Example}. Table~\ref{t:table3} contains two instances (rhombocuboctahedron with symmetry $m\bar3$ and $\bf 4^4(0, 4)$) for which $Z(G) = Z(Aut(\Gamma))$ holds, and therefore a non-trivial stabilization of $S$ does not occur.

\bigskip

\noindent If $G$ is a space group, more can be said about the stabilizer of a generating set.

\begin{prop}
\label{prop3}
Let $G$ be a space group and $S = S^{-1} \subset G$ be its finite set of generators. Then the group $H = Stab_G(S)$ is an FC-group.
\end{prop}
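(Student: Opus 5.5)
Let $S$ be finite and inverse-closed with $\langle S\rangle = G$, and put $H = Stab_G(S)$. The plan is to show that every $h \in H$ has only finitely many conjugates in $H$. To do this we look at the action of $H$ on $S$ by conjugation. The definition of the stabilizer gives a homomorphism $\rho: H \to Sym(S)$, $\rho(h)(s) = h^{-1}sh$. Since $S$ is finite, $Sym(S)$ is finite, so the kernel $K = \ker\rho$ has finite index in $H$.

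Next we identify $K$. An element of $K$ commutes with every $s \in S$. Since $S$ generates $G$, $K$ lies in $C_G(G) = Z(G)$. The center of a space group $G$ is a subgroup of the translation lattice $T(G)$: a central element commutes with all of $T(G)$, and because $P(G)$ acts faithfully on $T(G)$, its linear part must be trivial. So $K \le Z(G)$, and in particular $K$ is abelian and central in $H$.

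Now take $h \in H$. Write $H = \bigcup_{i=1}^m Kx_i$ with $m = |H:K| < \infty$. For $x = kx_i$ with $k \in K$ central we get $x^{-1}hx = x_i^{-1}hx_i$. Hence $h$ has at most $m$ conjugates in $H$, and $H$ is an FC-group. A sharper form of the same argument uses that $H/Z(H)$ is finite, because $K \le Z(H)$. By Schur's theorem $H'$ is then finite, which matches the criterion recalled in Section~3.1.1 that a group with finite commutator subgroup is FC.

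The only step that is not routine is the identification $K \le Z(G)$, together with the observation that this center is harmless. That is exactly where the generating property of $S$ is used; without it $K$ would only be a centralizer of a subset. Strictly, the FC conclusion needs only that $K$ is central in $H$, which follows from $K \le Z(G)$ alone. So the fact that $Z(G) \le T(G)$ is not needed for the statement. It is still worth recording, because it explains the rod-group stabilizers appearing in Table~\ref{t:table3} for polar groups, where $Z(G) \neq \{e\}$.
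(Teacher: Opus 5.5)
Your proof is correct and takes the same route as the paper: finiteness of $S$ forces $|H:Z(H)|<\infty$, and hence $H$ is an FC-group. You also make explicit the step the paper's sketch leaves implicit, namely that the kernel of the conjugation action $H\to Sym(S)$ centralizes $\langle S\rangle = G$ and therefore lies in $Z(G)\cap H \le Z(H)$.
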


\begin{proof} By assumption, $S$ is finite, and therefore the number of elements $h \in H$ \emph{modulo} $Z(H)$ which stabilize $S$ nontrivially is also finite. Hence $|H:Z(H)|$ must be finite that implies $H$ to be an FC-group \cite{Tom84}.
\end{proof}

\begin{prop}
\label{prop4}
Let $\Gamma$ be a Cayley graph of a space group $G$ with respect to the generating set $S = S^{-1}$, and let $Aut(\Gamma)$ be isomorphic to a space group. Then $S$ is stabilized by some $hz \in G$, where $h \in Stab_{Aut(\Gamma)}(x)$ and $z \in C_{Aut(\Gamma)}(G)$. Moreover, $hz$ is either the identity or an element of infinite order.
\end{prop}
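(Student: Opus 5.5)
The plan is to reduce the first assertion to Proposition~\ref{prop2} and then use the structure of space groups to control the order of the stabilizing element.

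\emph{Existence of $hz$.} Put $A = Aut(\Gamma)$. We identify $G$ with its regular left action on $\Gamma$. The right multiplications $x \mapsto xg$ preserve adjacency, so they are automorphisms of $\Gamma$, and they commute with the left action. Hence $C_{A}(G)$ contains a copy of $G$ and is nontrivial. Since $G$ is regular, $S$ coincides with the set $\{g \mid d(x,xg)=1\}$ appearing in Proposition~\ref{prop2}. To pass from step~1 of that sketch to an element actually lying in $G$, we use that $A$ is a space group. Then $G \le A$ has finite index, because $\Gamma$ is locally finite and the vertex stabilizer $Stab_A(x)$ is finite. So $T(G)$ has finite index in $T(A)$, and both groups are lattices of the same rank $d$. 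Given $h \in Stab_A(x)$ with $h^{-1}Sh = S$, we will choose $z \in C_A(G)$ so that $hz$ fixes the coset structure. Concretely, $hz$ should send $x$ to a vertex $y = xg_0$ with $g_0 \in G$, and then $hz = g_0$ as an element of $G$. Transitivity of $G$ on $V(\Gamma)$ supplies such a $g_0$, exactly as in step~2 of the proof of Proposition~\ref{prop2}. The identity $(hz)^{-1}S(hz) = h^{-1}Sh = S$ shows that $hz$ stabilizes $S$.

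\emph{Order of $hz$.} Suppose $hz \in G$ has finite order $m > 1$. By Proposition~\ref{prop3}, $H = Stab_G(S)$ is an FC-group, so the conjugacy class of $hz$ in $H$ is finite. I would then argue that the conjugacy class of $hz$ in $G$ is in fact finite. The reason is that $hz$ is built from $z$, which centralizes $G$, and from $h$, which acts on $G$ as conjugation preserving $S$. Once this is established, $hz$ acts as a bounded automorphism of $\Gamma$, by the equivalence recalled in Section~2. In a space group the FC-center is the maximal translation subgroup, so $hz \in T(G) \cong \Z^d$, which is torsion-free. Therefore $m = 1$, a contradiction. It follows that $hz$ is either the identity or an element of infinite order.

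\emph{Main obstacle.} The hardest step is the finiteness of the conjugacy class of $hz$ in all of $G$, not merely in $H$. My first approach is to write $hz = g_0$ and note that conjugation by $g_0$ agrees with conjugation by $h$ modulo $C_A(G)$. Since $h$ lies in the finite group $Stab_A(x)$ and $A$ is a space group, its linear part lies in the finite point group $P(A)$. I then want to use that $Aut(\Gamma)$ being a space group forces $C_A(G)$ to consist of translations. This is because the centralizer of a finite-index subgroup of a space group is contained in the translation lattice. From this, $g_0$ and $h$ should have the same image in $P(A)$. Combined with $z$ being a translation, the finite-order element $g_0 = hz$ of $G$ is then conjugate into $Stab_A(x)$. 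If $g_0 \ne 1$, it fixes a vertex, which contradicts the regularity of $G$. This alternative finish avoids the FC-argument entirely, and I would try it first. The delicate point in it is verifying that a finite-order element of the form $hz$ with $z$ a translation must fix a vertex, which is an averaging argument over the finite orbit $\langle hz \rangle x$.
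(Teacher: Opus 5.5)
There are genuine gaps, and the decisive one is in the order argument.

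Your opening step is incorrect for the paper's convention ($x\sim y$ iff $x^{-1}y\in S$). The right multiplication $\rho_k\colon x\mapsto xk$ preserves adjacency only if $k^{-1}Sk=S$, so $C_{Aut(\Gamma)}(G)=\{\rho_k : k\in Stab_G(S)\}$, not a copy of $G$. Your version, combined with your later and correct remark that $C_{Aut(\Gamma)}(G)$ consists of translations, would force $G$ to be abelian. In the existence step, knowing that $hz$ and $g_0$ send $x$ to the same vertex only shows $hz\in Stab_{Aut(\Gamma)}(x)\,g_0$, not $hz=g_0$.

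More seriously, neither route to the order claim works. In the FC route, you justify the finiteness of the $G$-conjugacy class of $hz$ by a reason that never uses the finite order of $hz$. As a general statement it is false: by Table~\ref{t:table3}, the screw rotation $4_1(00z)$ stabilizes the \textbf{pcu} generating set of $P4_1$, whose automorphism group is a space group. Yet it has infinitely many conjugates in $P4_1$. In the alternative finish, ``same linear part as $h$, $z$ a translation, finite order'' does not make $hz$ conjugate into $Stab_{Aut(\Gamma)}(x)$. Averaging over $\langle hz\rangle x$ gives a fixed point in $\R^d$, not a fixed vertex. For example, the bi-infinite path on $\Z$ is the Cayley graph of the regular group $G=\{x\mapsto x+2j,\ x\mapsto 2j+1-x : j\in\Z\}$ with $S=\{x\mapsto 1-x,\ x\mapsto -1-x\}$, and $Aut(\Gamma)\cong D_\infty$ is a space group. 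The involution $x\mapsto 1-x\in G$ is the product of $x\mapsto -x\in Stab_{Aut(\Gamma)}(0)$ and a unit translation, yet it fixes no vertex.

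What is missing is the commutation $hz=zh$. Since $z$ centralizes $G\ni hz$, it commutes with $hz$ and therefore with $h$. Then $(hz)^n=h^nz^n=z^n$ with $n=|h|$. So if $hz$ has finite order, the translation $z$ is trivial, and $hz=h\in G\cap Stab_{Aut(\Gamma)}(x)=\{1\}$ by regularity. This is exactly the paper's argument: $hz=zh$ puts $z$ in the invariant subspace of $h$, so $hz$ has infinite order unless it is trivial.

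Your right-multiplication idea, once restricted to $Stab_G(S)$, gives this even more directly. For $k\in Stab_G(S)$, left multiplication by $k$ equals $c_k^{-1}\rho_k$. Here $c_k\colon x\mapsto k^{-1}xk$ lies in $Stab_{Aut(\Gamma)}(1_G)$ by Proposition~\ref{prop1}, and $\rho_k\in C_{Aut(\Gamma)}(G)$ commutes with it. This is precisely the decomposition $hz$ of the statement. Since $\rho_k$ has the same order as $k$, and $C_{Aut(\Gamma)}(G)$ lies in the torsion-free translation subgroup of $Aut(\Gamma)$ (as you correctly observed, because $G$ has finite index), every finite-order $k\in Stab_G(S)$ is trivial.
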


\begin{proof} Since $G$ is a subgroup of $Aut(\Gamma)$ of finite index, $z \in C_{Aut(\Gamma)}(G)$ is either the identity or a nontrivial translation. As in Proposition~\ref{prop2}, $S = h^{-1}Sh$ for some $h \in Stab_{Aut(\Gamma)}(x)$. The element $hz$ stabilizes $S$ in the same way $(hz)^{-1}S(hz) = h^{-1}Sh$, if and only if $hz = zh$. If $C_{Aut(\Gamma)}(G) = \{e\}$, then $hz = 1$  since $h = 1$ must hold ($G$ is a regular group of automorphisms). So assume $z \neq 1$. A nontrivial stabilization of $S$ is achieved whenever $h \neq 1$. Then, $hz=zh$ is only possible if $z$ belongs to the invariant subspace of $h$, in which case $hz$ is of infinite order.
\end{proof}


\begin{prop}
\label{prop5}
Under the assumptions of Proposition~\ref{prop4}, the group $Stab_G(S)$ is torsion-free abelian. 
\end{prop}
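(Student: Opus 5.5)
We combine Proposition~\ref{prop3} with Proposition~\ref{prop4}. Put $H = Stab_G(S)$. By Proposition~\ref{prop3}, $H$ is an FC-group, so it suffices to show two things: $H$ has no nontrivial elements of finite order, and a torsion-free FC-group of this kind is abelian.

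\emph{Torsion-freeness.} Let $g \in H$ have finite order. Conjugation by $g$ preserves $S$, so by Proposition~\ref{prop1} it induces an automorphism $\alpha_g$ of $\Gamma$ fixing the identity vertex. Composing $\alpha_g$ with the left shift by $g$ gives the map $x \mapsto xg$ (up to the usual left/right convention), and right multiplication by an element of $S$-stabilizing conjugation is an automorphism of $\Gamma$ commuting with all left shifts. It therefore lies in $C_{Aut(\Gamma)}(G)$. Following the proof of Proposition~\ref{prop4}, an element of $C_{Aut(\Gamma)}(G)$ is a translation of the space group $Aut(\Gamma)$, so it is either trivial or of infinite order. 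We thus get a factorization $g = hz$ with $h \in Stab_{Aut(\Gamma)}(x)$ and $z \in C_{Aut(\Gamma)}(G)$, exactly as in Proposition~\ref{prop4}, whose last sentence says $hz$ is either the identity or of infinite order. Since $g$ has finite order, $g=1$, and $H$ is torsion-free.

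\emph{Abelianness.} In an FC-group the elements of finite order form a subgroup containing the commutator subgroup. More precisely, by Schur's theorem applied to the finitely generated FC-subgroups, $H'$ is locally finite, hence consists of torsion elements. Torsion-freeness then forces $H'=1$, so $H$ is abelian. An alternative route avoids Schur: $H \le G$ and $|H:Z(H)|<\infty$ (the proof of Proposition~\ref{prop3}), and a torsion-free group with center of finite index is abelian. This last step uses the standard fact that $H'$ is then finite (Schur again), hence trivial.

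The main obstacle is the torsion-freeness step. One must justify rigorously that the conjugating element $g \in G$ really realizes an automorphism of the form $hz$ with $z$ centralizing $G$, i.e.\ that right multiplication by $g$ is a graph automorphism when $g$ normalizes $S$. It is: $x^{-1}y\in S$ implies $(xg)^{-1}(yg)=g^{-1}x^{-1}yg\in S$. The identification of $C_{Aut(\Gamma)}(G)$ with translations then rests on $Aut(\Gamma)$ being a space group containing $G$ with finite index. In a space group the centralizer of a finite-index subgroup lies in the FC-center, which is the translation lattice, and this is the point I would make explicit.
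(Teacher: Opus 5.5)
Your argument is correct and matches the paper's own proof: $Stab_G(S)$ is an FC-group by Proposition~\ref{prop3}, it has no nontrivial elements of finite order by Proposition~\ref{prop4}, and a torsion-free FC-group is abelian. You also supply two details the paper leaves tacit. First, each $g\in Stab_G(S)$ factors as the centralizing right shift $x\mapsto xg$ composed with $\alpha_g^{-1}\in Stab_{Aut(\Gamma)}(1_G)$, so Proposition~\ref{prop4} really applies to every stabilizing element; indeed that right shift has the same order as $g$ and lies in the torsion-free translation lattice, which already forces $g=1$. Second, you give the Schur-type argument that $H'$ consists of torsion elements.
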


\begin{proof} By Proposition~\ref{prop2}, $Stab_G(S)$ is an FC-group that is a subgroup of a space group. Further, by Proposition~\ref{prop4}, it has no nontrivial finite-order elements. Then it is torsion-free abelian.
\end{proof}

\medskip

\noindent Now we are in a position to state our main result.

\begin{theorem}
\label{teo1}
Let $\Gamma$ be a Cayley graph of a space group $G$ with respect to the generating set $S = S^{-1}$. Then the following assertions are equivalent:\\
1) there exists element $g \in G, g \neq 1$ of finite order such that $S = g^{-1}Sg$;\\
2) $\Gamma$ has nontrivial bounded automorphisms of finite order. 
\end{theorem}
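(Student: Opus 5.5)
The implication 1) $\Rightarrow$ 2) can be made explicit. The opposite implication I plan to prove by contraposition, using the structure of $Aut(\Gamma)$ and Propositions~\ref{prop4}--\ref{prop5}.

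\emph{1) $\Rightarrow$ 2).} Let $g\neq 1$ have finite order with $g^{-1}Sg=S$. By Proposition~\ref{prop1}, conjugation $c_g: x\mapsto g^{-1}xg$ is an automorphism of $\Gamma$ fixing the identity vertex. Compose it with the left shift $\lambda_g: x\mapsto gx$, which is also an automorphism. This gives the automorphism $\varphi=\lambda_g\circ c_g: x\mapsto xg$, that is, right multiplication by $g$.

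Right multiplication by a group element moves each vertex by a bounded amount, since $d(x,xg)=|g|_S$ is constant in $x$. Hence $\varphi$ is bounded. It has order equal to the order of $g$, which is finite and greater than $1$, because $x\mapsto xg$ is the identity only when $g=1$. So $\varphi$ is a nontrivial bounded automorphism of finite order.

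\emph{2) $\Rightarrow$ 1).} Let $\beta\neq 1$ be a bounded automorphism of finite order. I would first reduce to a right multiplication, in three steps.

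\begin{enumerate}
\item A bounded automorphism of finite order in $Aut(\Gamma)$ centralizes a finite-index subgroup of $G$. The reason is that $\beta$ commutes with left shifts up to bounded displacement, and in $G$ the bounded-displacement elements form the FC-center $F(G)=T(G)$. Concretely, for $t\in T(G)$ the commutator $[\beta,\lambda_t]$ is bounded and so lies in the finite-type group $B(\Gamma)$.
\item Use Trofimov's structure theorem \cite{Trofimov84}: $B(\Gamma)$ has a finite normal subgroup $K$ with $Aut(\Gamma)/K$ acting on the quotient graph as a group containing a space group of finite index. In particular $\beta\in K$, and the orbits of $K$ form a $G$-invariant block system with finite blocks.
\item Since $\beta$ commutes with a finite-index subgroup of the regular group $G$ and moves the identity vertex $1$ to some vertex $y$, define $g:=y$. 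The aim is to show that $\beta$ agrees with $x\mapsto xg$ on a coset of a finite-index normal subgroup, and then to average over the finitely many cosets.
\end{enumerate}

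The cleaner route is to replace $\beta$ by a \emph{nontrivial} element of finite order normalizing $G$, namely an element of the finite normal subgroup $K\cap C_{Aut(\Gamma)}(T)$. Then consider $\langle G, \beta\rangle$, which is a discrete cocompact extension of $G$ by a finite normal subgroup $K_0$. Because $G$ is a space group, $G$ has no nontrivial finite normal subgroup, so $G\cap K_0=1$. This gives $\langle G,K_0\rangle = G\ltimes K_0$, and $K_0$ acts freely-by-permutation on the blocks.

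Let $\psi$ be the left shift by $y^{-1}$ (an element of $G$), where $y$ is chosen so that $k\in K_0$ maps $1\mapsto y$. Then $\psi k$ fixes $1$ and normalizes $G$. So by the converse of Proposition~\ref{prop1} it acts on $G$ as an automorphism $\alpha$ preserving $S$, and since it fixes $1$ and maps neighbours to neighbours, $\alpha(S)=S$. Because $k$ commutes with a finite-index subgroup $T'$ of $T$, the automorphism $\alpha$ agrees on $T'$ with conjugation by $y^{-1}$.

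By rigidity of space groups (Bieberbach), an automorphism of $G$ that restricts on $T'$ to conjugation by an element $y^{-1}\in G$ must be conjugation by $y^{-1}$ times an automorphism trivial on $T'$. Automorphisms trivial on a finite-index lattice in a space group act trivially modulo a cocycle; I expect one can show this cocycle is zero using the finite order of $k$. Then $\alpha=c_{y^{-1}}$, so $yS y^{-1}=S$. Moreover $k=\psi^{-1}c_{y^{-1}}$ is right multiplication by $y^{-1}$, and it has finite order, hence $g=y^{-1}$ has finite order. The element $g$ is nontrivial because $k\neq1$.

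The main obstacle is this last rigidity step: ruling out an outer automorphism of $G$ that is trivial on $T'$ but not inner by an element of $G$. I would first try to show directly that $\alpha c_y$ centralizes $T'$ and is induced by an isometry of $\R^d$ commuting with $T'\otimes\R$, hence a translation. Finiteness of its order modulo inner automorphisms should then force it to be an inner automorphism by an element of $T$, which can be absorbed into $y$.
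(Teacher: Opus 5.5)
Your proof of 1)$\Rightarrow$2) is correct, and it is more direct than the paper's. The map $x\mapsto xg$ is an automorphism exactly when $g^{-1}Sg=S$, it moves every vertex by $|g|_S$, and its order equals that of $g$. The paper obtains this direction through Proposition~\ref{prop4} and Trofimov's theorem, and its proof never addresses the converse. Your argument in fact shows more: since the centralizer of the left regular action consists of right multiplications, 1) is equivalent to $C_{Aut(\Gamma)}(G)$ containing a nontrivial element of finite order. So 2)$\Rightarrow$1) amounts to producing a bounded finite-order automorphism that \emph{commutes with} $G$.

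That is where your converse breaks down, at several points.
\begin{itemize}
\item Your first step is false. Take $\mathbf{3^6(0,2)}=\mathrm{Cay}(D_\infty,\{a,b^{\pm1},ab,ba\})$. The transposition of a single block $\{u_i,v_i\}$ of true twins is a bounded involution. Conjugating it by a translation gives the transposition of another block, so it centralizes no finite-index subgroup of $G$.
\item The same example rules out a finite normal $K\ni\beta$. That transposition has infinitely many conjugates, so it lies in no finite normal subgroup. Trofimov's theorem gives finite blocks, not a finite kernel.
\item In the ``cleaner route'' you pass from ``$G$ normalizes $K_0$'' to ``$k\in K_0$ normalizes $G$''. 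But in $G\ltimes K_0$ the complement $G$ is normal only if $[G,K_0]=1$, and that is precisely the missing statement.
\item The rigidity step is not merely unproven. In $D_\infty=\langle a,t\rangle$, with $a$ a reflection and $t$ a translation, the automorphism $a\mapsto at$, $t\mapsto t$ is trivial on $T$, has order $2$ modulo inner automorphisms, and is not inner.
\end{itemize}

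In fact 2)$\Rightarrow$1) is false, so no repair is possible.

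\emph{Setup.} Let $G=\Z^2\rtimes D_3$ ($p31m$), with elements $(L,\tau)\colon x\mapsto Lx+\tau$. Take $R\colon a_1\mapsto a_2\mapsto -a_1-a_2$ and $M\colon a_1\leftrightarrow a_2$, and set $h=(R,0)$, $m=(M,0)$, $\chi(L,\tau)=\det L$. Let $S=\{h^{\pm1},m\}\cup\{t_w : w\in V_0\cup V_1\}$, where the $t_w$ are translations, $V_0=\{\pm a_1,\pm a_2,\pm(a_1+a_2)\}$ and $V_1=\{\pm(3a_1+a_2),\pm(-a_1+2a_2),\pm(2a_1+3a_2)\}$.

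\emph{2) holds.} Put $\beta(x)=x\,h^{-\chi(x)}$. Then $\beta(x)^{-1}\beta(xs)=h^{\chi(x)}\,s\,h^{-\chi(x)\chi(s)}$, which lies in $S$ because $V_0,V_1$ are $R$-invariant and $hmh=h^{-1}mh^{-1}=m$. So $\beta$ is an automorphism of order $3$ that moves every vertex to a neighbour.

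\emph{1) fails.} Suppose $g=(L,\tau)\neq1$ has finite order and $gSg^{-1}=S$. Since $m$ is the only element of $S$ with reflection linear part, $gmg^{-1}=m$, so $L\in\{1,M\}$. If $L=1$, $g$ would be a nontrivial translation, which has infinite order; hence $L=M$. But then $g\,t_{3a_1+a_2}\,g^{-1}=t_{a_1+3a_2}\notin S$, a contradiction.

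Adjoining $t_z^{\pm1}$ gives the same counterexample for $P31m$. Here $\lambda_m\beta\lambda_m^{-1}=\beta^{-1}$: $G$ normalizes $\langle\beta\rangle$, but $\beta$ does not normalize $G$. This is exactly the failure point identified above.
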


\begin{proof} By Proposition~\ref{prop4}, the existence of a stabilizing element $g$ of finite order implies that $Aut(\Gamma)$ is not isomorphic to a space group. At the same time, $\Gamma$ is a graph with polynomial growth since $\Z^d$ acts on $V(\Gamma)$ with finitely many orbits. Then, from the characterization of graphs with polynomial growth \cite{Trofimov84}, it follows that $\Gamma$ has bounded automorphisms of finite order.
\end{proof}

\begin{corollary}
\label{cor1}
The finite-order elements of $G$ stabilizing $S$ form a finite subgroup $H$ of $G$. $V(\Gamma)$ is partitioned into finite blocks which are orbits of $H$ and its conjugates in $G$.
\end{corollary}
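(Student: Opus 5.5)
We need to prove Corollary~\ref{cor1}: the finite-order elements of $G$ that stabilize $S$ form a finite subgroup $H$, and the $H$-orbits together with the orbits of its conjugates partition $V(\Gamma)$ into finite blocks.

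The plan works entirely inside $K=Stab_G(S)$. By Proposition~\ref{prop3}, $K$ is an FC-group. Since $K$ is a subgroup of a space group, it is finitely generated; indeed every subgroup of a polycyclic group is finitely generated. A finitely generated FC-group has a finite commutator subgroup, and its torsion elements form a finite normal subgroup. This is the standard structure theory of FC-groups \cite{Tom84}, which the paper already invokes.

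Concretely, the argument runs as follows.
\begin{enumerate}
\item The torsion elements of an FC-group form a subgroup. If $a,b$ have finite order, then $\langle a^K, b^K\rangle$ is a finitely generated FC-group generated by finitely many torsion elements. Modulo its finite derived subgroup it is abelian and generated by torsion elements, hence finite.
\item $H$ is finite. We have $K/Z(K)$ finite (the index bound in the proof of Proposition~\ref{prop3}), and $Z(K)$ is a finitely generated abelian group, so its torsion part is finite. Hence $H\cap Z(K)$ is finite. Since $H\cap Z(K)$ has finite index in $H$ (at most $|K:Z(K)|$), $H$ is finite.
\item An alternative route to finiteness, if one prefers not to rely on finite generation: every finite subgroup of $G$ injects into the point group $P(G)$, so finite subgroups have bounded order. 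Combined with step 1, this also gives $|H|\le |P(G)|$.
\end{enumerate}

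For the partition, view $\Gamma$ as a Cayley graph with $G$ acting by left multiplication. The left cosets $\{xH : x\in G\}$ partition $G=V(\Gamma)$ into blocks of size $|H|$. Each coset $xH$ equals $(xHx^{-1})\,x$, so it is the orbit of the vertex $x$ under the conjugate subgroup $xHx^{-1}$ acting by left multiplication. This gives the stated description: the blocks are orbits of $H$ and of its conjugates.

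It remains to check compatibility with the graph structure, i.e.\ that $G$ permutes these blocks. This holds because left multiplication maps left cosets of $H$ to left cosets. One can add that, by Theorem~\ref{teo1}, the bounded automorphisms of finite order act within these blocks, which links the corollary to Trofimov's partition. That link is interpretive, however, and not needed for the statement itself.

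The main obstacle is the first claim: a set of torsion elements need not be closed under products in general. Closure needs the FC property from Proposition~\ref{prop3}, which makes $\langle a^K,b^K\rangle$ a finitely generated FC-group, and then the finiteness of its derived subgroup. I would cite Dietzmann's lemma at this point: a finite normal set of torsion elements generates a finite subgroup. Applied to the finite normal set $a^K\cup b^K$, it closes the gap directly.
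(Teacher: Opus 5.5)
Your argument is correct and is essentially the route the paper intends. The paper gives the corollary without proof, resting on three ingredients: Proposition~3 (the stabilizer is an FC-group), the standard fact that torsion elements of an FC-group form a subgroup \cite{Tom84}, and the identification of the blocks with the left cosets $xH$.

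Your Step~3 is the cleanest way to get finiteness. Since $T(G)$ is torsion-free, $H\cap T(G)=1$, so $H$ embeds in $P(G)$; this makes Step~2 redundant. Step~2 also has a small inaccuracy: space groups need not be polycyclic in dimension $\ge 4$, because the non-solvable point group $S_5$ occurs. Finite generation of $K$ should instead be justified by $K\cap T(G)$ being a finitely generated subgroup of finite index in $K$.

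One aside should be deleted, because it is false rather than merely interpretive. Bounded automorphisms of finite order need \emph{not} act within the blocks $xH$, and Theorem~1 asserts nothing of the sort. Here is a counterexample.

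Take $G=p3m1=T\rtimes P_0$, where $P_0\cong D_3$ is the stabilizer of the origin, $r\in P_0$ has order~3 and $m\in P_0$ is a reflection. Let $S=\{m,r,r^{-1}\}\cup L$ with $L=\{t_{\pm a},t_{\pm b},t_{\pm(a+b)}\}$. Since $L$ is invariant under conjugation by $P_0$, the identity $tk\cdot\ell=t(k\ell k^{-1})k$ shows that $L$-edges change only the $T$-coordinate, while the other edges change only the $P_0$-coordinate. Hence $\Gamma\cong\mathrm{Cay}(T,L)\,\square\,(K_3\,\square\,K_2)$.

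An element stabilizing $S$ must conjugate $r$ to $r^{\pm1}$, hence fix the origin. It must also centralize $m$, the only involution in $S$. So $H=\langle m\rangle$ and your blocks have size~2. Yet the automorphisms of the prism factor $K_3\,\square\,K_2$ are bounded, of finite order, and transitive on every fibre $tP_0$ of size~6.

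The correct link to bounded automorphisms is the right regular action. For $h\in H$, the map $x\mapsto xh$ preserves adjacency because $h^{-1}Sh=S$, and it moves every vertex by the constant distance $d(e,h)$. The group of these maps has exactly the blocks $xH$ as orbits. Thus the blocks are orbits of a finite subgroup of $B_o(\Gamma)$, but $B_o(\Gamma)$ itself may fuse several of them.
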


\medskip

\noindent The above results provide a recipe for constructing Cayley graphs of space groups which \emph{a priori} possess bounded automorphisms of finite order.

\begin{corollary}
\label{cor2}
Let $G$ be a space group, and $H$ its nontrivial finite subgroup. Let $S=S^{-1}$ be a finite set of generators for $G$ such that $S$ is a union of finitely many orbits of $H$ on elements of $G$. Then, by construction, $H = Stab_G(S)$ and the respective Cayley graph has bounded automorphisms of finite order.
\end{corollary}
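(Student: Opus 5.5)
We must prove Corollary~\ref{cor2}: if $S=S^{-1}$ is a finite generating set of the space group $G$ that is a union of orbits of the nontrivial finite subgroup $H$ acting by conjugation, then $H$ stabilizes $S$ and the Cayley graph $\Gamma$ has nontrivial bounded automorphisms of finite order. The plan is to check directly that $H \leq Stab_G(S)$ and then appeal to Theorem~\ref{teo1}, with Proposition~\ref{prop1} supplying the concrete automorphisms.

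First, the inclusion $H \leq Stab_G(S)$. Write $S=\bigcup_i \{h^{-1}s_ih \mid h\in H\}$. For $k\in H$ we have $k^{-1}(h^{-1}s_ih)k=(hk)^{-1}s_i(hk)$, which lies in the same $H$-orbit. Hence $k^{-1}Sk=S$, and each nontrivial $k\in H$ has finite order and stabilizes $S$. This is exactly assertion 1) of Theorem~\ref{teo1}, so assertion 2) follows: $\Gamma$ has nontrivial bounded automorphisms of finite order.

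To make this less dependent on the Trofimov-based argument, I would also exhibit the automorphisms. By Proposition~\ref{prop1}, $k\in H$ induces $\phi_k: x\mapsto k^{-1}xk$, which fixes the identity vertex. Composing with the left shift by $k$ gives $x\mapsto xk$, but that is unbounded in general. Instead, $\phi_k$ itself is a graph automorphism of order dividing $|k|$. Its displacement is $d(x,k^{-1}xk)=d(e,x^{-1}k^{-1}xk)$, which is finite for each $x$ but need not be uniformly bounded. So the bounded automorphisms really come from the block structure of Corollary~\ref{cor1}, and Theorem~\ref{teo1} must be invoked rather than $\phi_k$ used directly.

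The main obstacle is the claimed equality $H=Stab_G(S)$. The orbit construction only gives $H\leq Stab_G(S)$. $S$ may be stabilized by further elements, for instance central translations when $G$ is polar (Table~\ref{t:table3}), or a larger finite subgroup whose orbits happen to coincide. So I would read the equality as ``$H$ is contained in the finite-order part of $Stab_G(S)$.'' I would then use Proposition~\ref{prop3} to note that $Stab_G(S)$ is an FC-group whose torsion elements form the finite subgroup of Corollary~\ref{cor1}. The statement holds with equality precisely when $H$ is chosen maximal, i.e. equal to that torsion subgroup. I would add this as a clarifying remark, since the boundedness conclusion does not depend on it.
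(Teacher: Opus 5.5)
Your main argument is exactly the paper's implicit proof behind ``by construction'': conjugation by $k\in H$ permutes each $H$-orbit, so $k^{-1}Sk=S$, and then 1)$\Rightarrow$2) of Theorem~\ref{teo1} applies. Your caveat about the equality is also justified, since only $H\le Stab_G(S)$ follows. Central elements stabilize every $S$, so $Z(G)\le Stab_G(S)$ always. Hence equality is impossible for polar $G$; for example, $t_z\in Z(P4)$. Equality also fails whenever $H$ is a proper subgroup of a finite group stabilizing $S$, because $S$ is then still a union of $H$-orbits. The kernel of the conjugation action of $Stab_G(S)$ on $S$ is $Stab_G(S)\cap Z(G)$, since $S$ generates $G$. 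So equality forces $Z(G)=1$, and then $Stab_G(S)$ embeds in $Sym(S)$ and is finite. The literal statement therefore holds only for non-polar $G$ with $H$ the full stabilizer.

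Your third paragraph, however, contains a real error, and it is what makes you think Theorem~\ref{teo1} cannot be bypassed. With the paper's convention ($g\sim h$ iff $g^{-1}h\in S$) one has $d(x,y)=|x^{-1}y|_S$. So the right multiplication $\rho_k\colon x\mapsto xk$ has displacement $d(x,xk)=|k|_S$, the same for every vertex. It is bounded, not unbounded. The unbounded map is the left shift $x\mapsto kx$, with displacement $|x^{-1}kx|_S$, and hence also $\phi_k$.

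Moreover, $\rho_k$ is an automorphism precisely because $(xk)^{-1}(yk)=k^{-1}(x^{-1}y)k$ and $k^{-1}Sk=S$. Its order equals that of $k$, and it fixes no vertex when $k\neq 1$. So every $1\neq k\in H$ directly gives a nontrivial bounded automorphism of finite order. The orbits of $\{\rho_k\colon k\in H\}$ are the cosets $xH=(xHx^{-1})x$, which are exactly the finite blocks of Corollary~\ref{cor1}.

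This is a self-contained, elementary proof of the corollary, and of 1)$\Rightarrow$2) in Theorem~\ref{teo1}. It needs neither Proposition~\ref{prop4} nor Trofimov's theorem. You should correct the conclusion of that paragraph rather than fall back on Theorem~\ref{teo1}.
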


\noindent Moreover, the following general construction due to de la Salle and Tessera (2019) (which can be viewed as `symmetrization' of a generating set by a finite subgroup) leads to Cayley graphs with infinite vertex stabilizers in $Aut(\Gamma)$ (an example will be given in the next section).

\begin{theorem}
\label{teo2}
\emph{\cite[Lemma~6.1]{Salle19}} Let $G$ be a space group with a finite generating set $S=S^{-1}$ and $H$ a nontrivial finite subgroup of $G$. The generating set $\{hsh'|h, h' \in H, s\in S \}$ gives rise to a Cayley graph $\Gamma$ with infinite vertex stabilizers in $Aut(\Gamma)$. 
\end{theorem}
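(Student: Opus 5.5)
Set $S' = \{hsh' \mid h, h' \in H,\ s \in S\}$, with $\Gamma$ its Cayley graph. The plan has three steps: show that $S'$ is a legitimate generating set, identify a large family of automorphisms of $\Gamma$, and then fix one vertex and push the other automorphisms through infinitely many cosets so that they all fix that vertex.

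\textbf{Step 1 (admissibility of $S'$).} $S'$ is finite because $S$ and $H$ are finite. It is inverse-closed because $(hsh')^{-1} = h'^{-1}s^{-1}h^{-1}$ and $S = S^{-1}$. It generates $G$ because it contains $S$, taking $h = h' = 1$. The identity may lie in $S'$; since Cayley graphs here are simple, I would remove it. This does not affect the argument, because the key property below is stated for $S'\setminus\{1\}$.

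\textbf{Step 2 (the key invariance).} For all $h_1, h_2 \in H$ we have $h_1 S' h_2 = S'$. Consequently, for $x \in G$, the neighbourhood $xS'$ of $x$ equals $xhS'$ for every $h \in H$. So any two vertices in the same left coset $xH$ have the same neighbourhood, and the same holds after removing the vertices themselves. Two cases arise.

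\emph{Case 1: $1 \notin S'$.} Then $xH$ is an independent set of $|H|$ vertices, all with identical neighbourhoods $xS'$, and $xS'$ is disjoint from $xH$. Any permutation of $G$ that preserves each left coset $xH$ setwise preserves adjacency. To check this, take an edge $\{u, v\}$, so $u^{-1}v \in S'$. For $u' \in uH$ and $v' \in vH$ we get $u'^{-1}v' \in HS'H = S'$.

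\emph{Case 2: $1 \in S'$, i.e. $H \subseteq S'$.} Then each coset $xH$ is a clique, and its vertices have the same closed neighbourhoods. Again, any permutation preserving each coset is an automorphism.

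So, with $|H| \geq 2$, the group $\prod_{xH \in G/H} \mathrm{Sym}(xH)$, acting coset by coset, embeds in $Aut(\Gamma)$.

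\textbf{Step 3 (infinite stabilizer).} Fix the vertex $1$. Since $G$ is infinite and $H$ is finite, the set $G/H$ is infinite. For each coset $xH \neq H$, choose a transposition of two elements of $xH$. These transpositions generate an infinite subgroup, indeed an infinite direct sum of copies of $C_2$, and every element of it fixes the vertex $1$. Hence $Stab_{Aut(\Gamma)}(1)$ is infinite, and by vertex-transitivity (via left multiplication by $G$) every vertex stabilizer is infinite.

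These automorphisms also move vertices only within cosets of $H$, so they are bounded of finite order. This agrees with Theorem~\ref{teo1} and Corollary~\ref{cor1}, since $H$ stabilizes $S'$ by conjugation.

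The only delicate point is the bookkeeping in Step 2: the argument needs the two-sided invariance $HS'H = S'$, not merely invariance under conjugation, so that the \emph{left} cosets are blocks of twins. The presence of $1$ in $S'$ also has to be handled, which is why the argument splits into the two cases above.
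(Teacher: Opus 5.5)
Your proof is correct. It is the standard argument behind the cited Lemma~6.1 of de la Salle--Tessera, which the paper quotes without proof: the two-sided invariance $HS'H=S'$ makes each left coset $xH$ a block of twin vertices (independent if $1\notin S'$, a clique if $1\in S'$), so the unrestricted product of the groups $\mathrm{Sym}(xH)$ lies in $Aut(\Gamma)$, and its part supported away from the coset $H$ is infinite and fixes the identity vertex. This is the same mechanism the paper invokes informally for the $\bf 3^6(0,2)$ graph, where the vertices in the blocks $\{u_i,v_i\}$ are permuted independently.
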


\noindent However, not all space groups allow for Cayley graphs with bounded automorphisms of finite order.

\begin{corollary}
\label{cor3}
\emph{(cf. \cite[Corollary~6.6]{Salle19})} Let $G$ be a space group without nontrivial finite subgroups (it is a fixed-point-free space group that is often termed a Bieberbach group). Any Cayley graph of $G$ with respect to a finite generating set has no bounded automorphisms of finite order.
\end{corollary}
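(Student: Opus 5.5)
The plan is to show that a bounded automorphism of finite order of $\Gamma$ would force a nontrivial finite-order element of $G$ that stabilizes $S$. Since a Bieberbach group has no such element, this contradicts Theorem~\ref{teo1}. The argument runs by contraposition through the equivalence 1)$\Leftrightarrow$2) of Theorem~\ref{teo1}.

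Assume $\Gamma$ is a Cayley graph of a Bieberbach group $G$ with respect to a finite generating set $S=S^{-1}$, and suppose $\Gamma$ has a nontrivial bounded automorphism of finite order. By the implication 2)$\Rightarrow$1) of Theorem~\ref{teo1}, there is an element $g\in G$ with $g\neq 1$, $g$ of finite order, and $g^{-1}Sg=S$. The cyclic subgroup $\langle g\rangle$ is then a nontrivial finite subgroup of $G$. This contradicts the hypothesis that $G$ has no nontrivial finite subgroups, which for a space group is the same as being torsion-free. Hence $\Gamma$ has no bounded automorphisms of finite order.

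The main obstacle is that everything rests on the direction 2)$\Rightarrow$1) of Theorem~\ref{teo1}, and the proof given above really argues only 1)$\Rightarrow$2). For a self-contained argument I would prove the needed direction through Trofimov's structure theorem \cite{Trofimov84}. The bounded finite-order automorphisms generate a nontrivial finite normal subgroup $K\lhd Aut(\Gamma)$, or at least $Aut(\Gamma)$ has a nontrivial finite normal subgroup whose orbits give the block system of imprimitivity. Pass to the quotient $Aut(\Gamma)/K$ acting on the quotient graph. By Proposition~\ref{prop4} and Proposition~\ref{prop5}, in the case where $Aut(\Gamma)$ is itself a space group, every element of $G$ stabilizing $S$ is of infinite order. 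In that case all bounded automorphisms of $Aut(\Gamma)$ lie in its FC-center, which is its translation subgroup, so they are torsion-free. The delicate point is therefore to show that $Aut(\Gamma)$ must be virtually $\Z^d$ with trivial finite radical when $G$ is torsion-free.

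To settle that point, I would first try the following. Since $G$ is regular and of finite index in $Aut(\Gamma)$, a finite normal subgroup $K$ has finite orbits. The setwise stabilizer in $G$ of a $K$-orbit is then a finite subgroup of $G$, hence trivial. So $G$ acts freely on blocks, and one must exclude $K\neq 1$ using the Cayley structure. This is the argument of \cite[Corollary~6.6]{Salle19}, which I would cite for this step rather than reprove.
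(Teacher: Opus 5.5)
Your reduction is exactly the one the paper intends. Corollary~\ref{cor3} is given no separate proof and is meant as the contrapositive of 2)$\Rightarrow$1) in Theorem~\ref{teo1}. You are also right that the printed proof of Theorem~\ref{teo1} argues only 1)$\Rightarrow$2), which is in fact the trivial direction: $x\mapsto xg$ is an automorphism of $\Gamma$ iff $g^{-1}Sg=S$, it has the order of $g$, and it moves every vertex by the word length $|g|_S$.

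\textbf{The gap.} The direction 2)$\Rightarrow$1) is not merely unproved but false in general, so the corollary cannot be obtained by quoting it. Take the symmorphic group $G=T\rtimes\langle R,M\rangle$ of type $p31m$:
\begin{itemize}
\item $T$ is spanned by $e=(1,0)$ and $(\frac12,\frac{\sqrt3}{2})$;
\item $R$ is the rotation by $2\pi/3$ and $M:(x,y)\mapsto(x,-y)$;
\item $h=\{R|0\}$, $v=(0,\sqrt3)$, and $S=\{\{I|\pm R^ke\},\,h^{\pm1},\,\{M|R^kv\}:k=0,1,2\}$.
\end{itemize}
Here $S\cap\ker(\det)$ is invariant under conjugation by $h$, and $S\setminus\ker(\det)$ under $s\mapsto hsh$ ($\det$ of the linear part). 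Hence $g\mapsto gh^{\det g}$ is an automorphism of order~3 sending every vertex to a neighbour. Yet $Stab_G(S)=1$. A stabilizing element must fix the centre of $h$ and map to itself the line of $\{M|v\}$, the only reflection in $S$, and only the identity does both. Adjoining $t_z^{\pm1}$ gives the same phenomenon in $P31m$.

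\textbf{Your fallback.} It is the right idea, but as written it assumes $|Aut(\Gamma):G|<\infty$, i.e.\ finite vertex stabilizers. That is not automatic (it fails for the Cayley graph $\mathbf{3^6(0,2)}$ of $D_\infty$) and is part of what torsion-freeness has to deliver. Likewise, the bounded automorphisms of finite order need not generate a finite group; in that example they include $(C_2)^{\Z}$.

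The step you defer is one line. Let $\sigma$ be a $G$-invariant partition of $V(\Gamma)$ into finite blocks, and let $x,y$ lie in a block $B$. The unique $g\in G$ with $xg=y$ satisfies $Bg=B$. So $G_B$ acts regularly on $B$, and $|B|=|G_B|=1$ because $G$ is torsion-free. Now finish in two steps:
\begin{itemize}
\item Apply this to Trofimov's $Aut(\Gamma)$-invariant partition (Theorem~\ref{teo4}; $\Gamma$ has polynomial growth). Its blocks are singletons, so vertex stabilizers in $Aut(\Gamma)$ are finite.
\item A bounded $\phi$ then has only finitely many conjugates, since they all move a fixed vertex into a fixed finite ball. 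If $\phi$ also has finite order, Dietzmann's lemma makes its normal closure a finite normal subgroup. Its orbits form another $Aut(\Gamma)$-invariant partition into finite blocks, hence singletons, so $\phi=1$.
\end{itemize}
This proves the corollary directly from torsion-freeness, with no appeal to Theorem~\ref{teo1}.
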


\subsection{Cayley graphs of crystallographic groups: applications} 

The above theory has the following practical implications. Let $\Gamma$ be a Cayley graph of a space group $G$. The conjugation action of $G$ (or possibly of its affine normalizer) can be applied to compute additional automorphisms of $\Gamma$ that complements and extends available methods. The latter include the method of barycentric placement \cite{Delgado2004} and the method of vertex stabilizer computation \cite{Bab20}. The method of barycentric placement can be applied when $Aut(\Gamma)$ is isomorphic to some space group, and furthermore, if neighbourhoods of vertices of the graph are injectively mapped to their barycentric positions (in terms of O.~Delgado such embedding is \emph{neighbour-unique}). The method \cite{Bab20} puts no restrictions on $Aut(\Gamma)$ and allows to compute vertex stabilizer in $Aut(\Gamma)$ as a permutation group from a sufficiently large finite ball that requires, however, more preprocessing steps. Infinite stabilizers can be recognized from a characterstic growth of stabilizer order as soon as the radius of the ball is iteratively enlarged.

The main problem in computing $Aut(\Gamma)$ consists in handling bounded automorphisms of finite order. It is known that if a periodic graph has bounded automorphisms of finite order, then its barycentric placement shows up vertex-collisions. However, this statement cannot be reversed, and there exist periodic graphs with vertex-collisions but with no bounded automorphisms of finite order (\emph{cf.} examples below). Theorem~\ref{teo1} provides a necessary and sufficient criterion when $Aut(\Gamma)$ is a space group for a very broad class of vertex-transitive periodic graphs which are Cayley graphs of space groups. 

The examples in the next section are aimed at exhibiting certain aspects of Cayley graphs which are beyond geometric intuition. To facilitate the discussion, the following uniform notation is introduced: $\Gamma$ is a Cayley graph of a group $G$ for a generating set $S$, $B(\Gamma) \trianglelefteq Aut(\Gamma)$ is the group of all bounded automorphisms of $\Gamma$, and $B_o(\Gamma) \trianglelefteq Aut(\Gamma)$ is the group of all bounded automorphisms of finite order \cite{Seif89all}. Sometimes we shall also use the notation $G_o = G \cap B_o(\Gamma)$. Recall that $Stab_G(S)$ always refers to $Stab_G(S \cup S^{-1})$.

\subsubsection{Examples}

\noindent \emph{1-Periodic graphs}. The tabulation of homogeneous sphere packings with rod-group symmetry by Koch and Fischer (1978) provides a rich collection of Cayley graphs. Therein, four types [$\bf 3^6(0,2)$, $\bf 3^34^2(0,2)$, $\bf 4^4(2,2)$ and $\bf 6^3(0,2)$] stand out which have uncountable automorphism groups. They are all related to the graph $\bf 3^6(0,2)$ (that is the direct product of the infinite chain and $K_2$) which can be visualized as a band of edge-sharing tetrahedra (Fig.~\ref{f:infstab}). This graph is known in the mathematical literature since 1980s as a classical example of a vertex-transitive graph with infinite vertex stabilizers \cite{Jung84, Seif91}. The reason for this is that vertices within the blocks $\{u_i, v_{i} \}$ (Fig.~\ref{f:infstab}) can be permuted independently. That this graph can be also viewed as a Cayley graph of an infinite dihedral group $D_{\infty}$ (one of the two one-dimensional space groups) does not seem to be well-known. 

\begin{center}
\begin{figure}[h]
\centering
\includegraphics{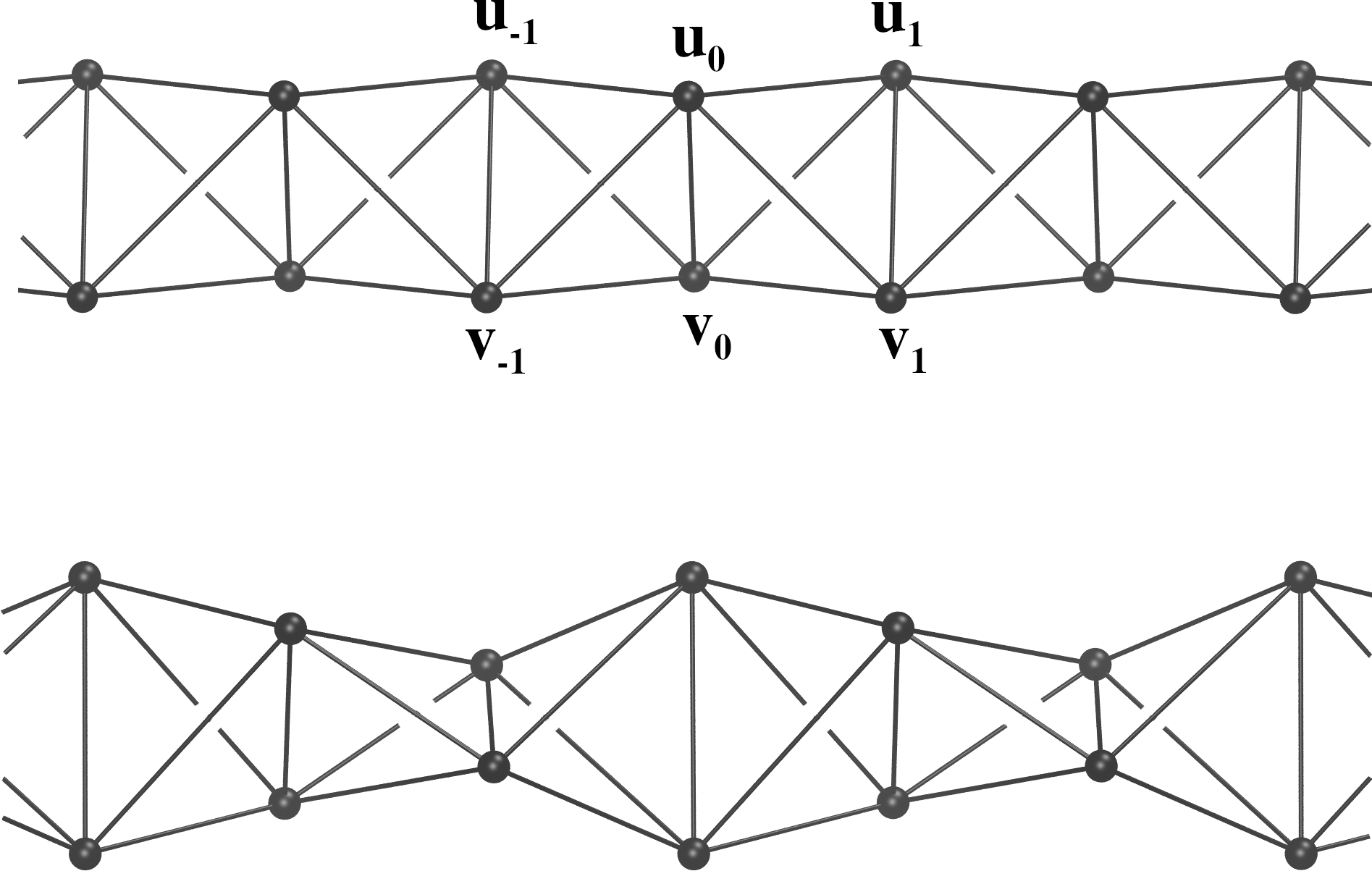}
\caption{Embeddings of the $\bf 3^6(0,2)$ graph with rod-group symmetry $p4_122$ (top) and $p6_122$ (bottom).}
\label{f:infstab}
\end{figure}
\vspace{-2em}
\end{center}

\begin{center}
\begin{figure}[h]
\centering
\includegraphics{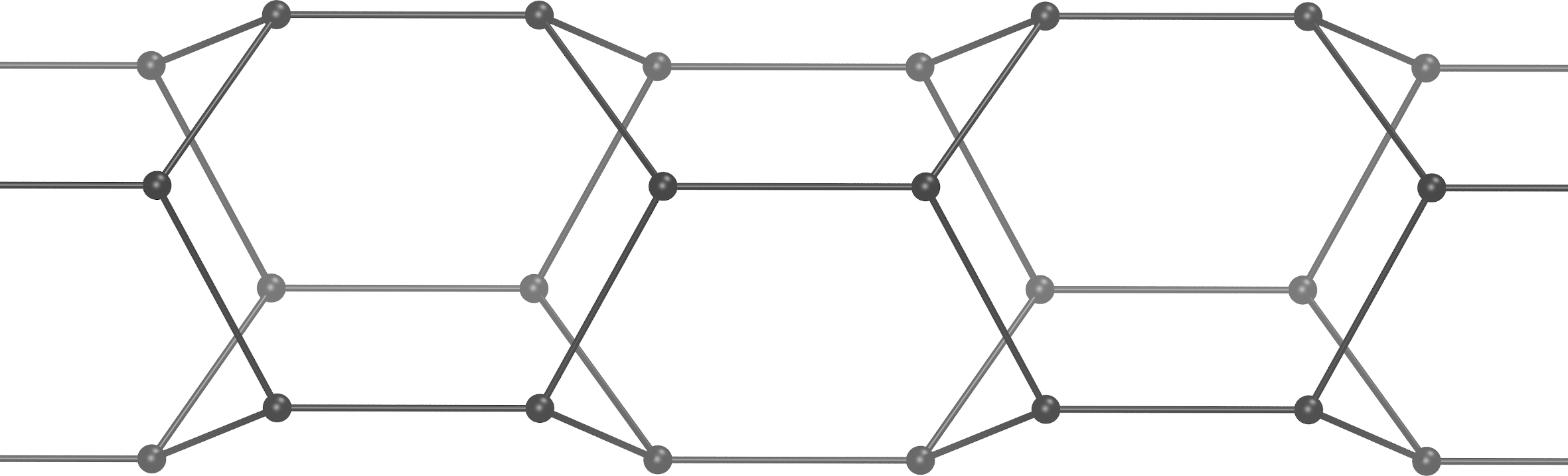}
\caption{The $(3, 0)$ carbon nanotube.}
\label{f:zig}
\end{figure}
\vspace{-2em}
\end{center}

\noindent Consider the minimal generating set for $D_{\infty}$:

\begin{center}
\begin{tabular}{l}
$D_{\infty} = \langle a, b | a^2, (ab)^2 \rangle$.\\
\end{tabular}
\end{center}

\noindent By Theorem~\ref{teo2}, it can be symmetrized with respect to $\langle a \rangle$ as follows:

\begin{center}
\begin{tabular}{l}
$D_{\infty} = \langle a, b, ab, ba \rangle$.\\
\end{tabular}
\end{center}

\noindent These abstract generators can be realized, for example, by the following isometries (\emph{cf.} Table~1):

\begin{center}
\begin{tabular}{l}
$p2/c  \, \, =  \langle \bar1(000), c(x0z), 2(0y\frac{1}{4}), 2(0y\frac{\bar1}{4}) \rangle$;\\
$p4_122 = \langle 2(x00), 4_1(00z), 2(xx\frac{1}{8}), 2(x\bar{x}\frac{\bar1}{8}) \rangle$;\\
$p6_122 = \langle 2(x00), 6_1(00z), 2(xx\frac{1}{12}), 2(x\bar{x}\frac{\bar1}{12} \rangle$.
\end{tabular}
\end{center}

\noindent The orbits of $\langle a \rangle $ on $V(\Gamma)$ are two-element blocks (corresponding to shared edges of tetrahedra) which are the orbits of $B_o(\Gamma)$. The generating sets for $p2/c$ and $p4_122$ give rise to sphere-packing graphs and therefore are listed in \cite{KF78}; the one for $p6_122$  does not allow for such a realization (Fig.~\ref{f:infstab}). Moreover, this graph also admits a vertex-transitive group $G < B(\Gamma)$:

\begin{center}
\begin{tabular}{l}
$C_{\infty} \times C_2 \cong pcc2 = \langle c(x0z), c(0yz), 2(00z) \rangle$. 
\end{tabular}
\end{center}

\noindent Here, since $G = pcc2$ is abelian (\emph{cf.} Theorem~5.14 in \cite{Jung84}), the generating set is trivially stabilized by the whole group. $G_o = \langle 2(00z) \rangle$ generates two-element blocks.

1-Periodic graphs show clearly certain phenomena which are hard to imagine otherwise. For example, \emph{zigzag} carbon nanotubes are graphs where  $Stab_{Aut(\Gamma)}(x) < B_o(\Gamma)$, \emph{i.e.} vertices are stabilized only by bounded automorphisms of finite order (\emph{e.g.} by a mirror plane of any boat-shaped 6-cycle in Fig.~\ref{f:zig}). For the $(3, 0)$ tube we have: 

\begin{center}
\begin{tabular}{l}
$Aut(\Gamma) = p6_3/mmc, B(\Gamma) = p6_3mc, B_o(\Gamma) = C_{3v}$,\\
$Stab_{Aut(\Gamma)}(x) = \langle m(2x,x,0) \rangle$.
\end{tabular}
\end{center}

\noindent Note here that vertex stabilizer is a self-normalizing subgroup of $Aut(\Gamma)$ (\emph{cf.} Proposition~\ref{prop2}).

\medskip

\noindent \emph{2-Periodic graphs}. The simplest examples of 2-periodic graphs with bounded automorphisms of finite order are so-called \emph{ladder graphs} which are obtained by linking directly two copies of the same graph. Construct a ladder graph $\Gamma$ from the two square lattices (Fig.~\ref{f:sql}). In this case the automorphism group, $Aut(\Gamma)$, is a layer group:

\begin{center}
\begin{tabular}{l}
$Aut(\Gamma) = p4/mmm, B(\Gamma) = p11m \cong \Z^2 \times C_2, B_o(\Gamma) \cong C_2$,\\
$Stab_{Aut(\Gamma)}(x) = \langle m_{xz}, m_{xxz} \rangle = C_{4v} \cong D_4$.
\end{tabular}
\end{center}

\noindent Owing to relatively rich vertex stabilizer, this graph has 56 regular automorphism groups (up to conjugacy in $Aut(\Gamma)$), 40 of which are isomorphic to the wallpaper groups. As an example, we provide here generating sets for three regular groups which are all isomorphic to the wallpaper group $p4mm$ (\emph{cf.} Table~\ref{t:table2}):

\begin{center}
\begin{tabular}{l}
$p422 = \langle 2(xx0), 4(00z), 4(\frac{1}{2}\frac{1}{2}z) \rangle$;\\
$p\bar4m2 =  \langle 2(xx0), m(x0z), m(0yz), m(x\frac{1}{2}z), m(\frac{1}{2}yz) \rangle$;\\
$p4/nbm = \langle 2(x00), 4(00z), m(x, \frac{1}{2}-x, z), m(\frac{1}{2}+x, x, z) \rangle$.
\end{tabular}
\end{center}

\noindent Clearly each set is stabilized by the respective in-plane 2-fold rotation exchanging the two subgraphs. Again, the orbits of these stabilizers are two-element blocks which are in turn the orbits of $B_o(\Gamma)$.

\begin{center}
\begin{figure}[h]
\centering
\includegraphics{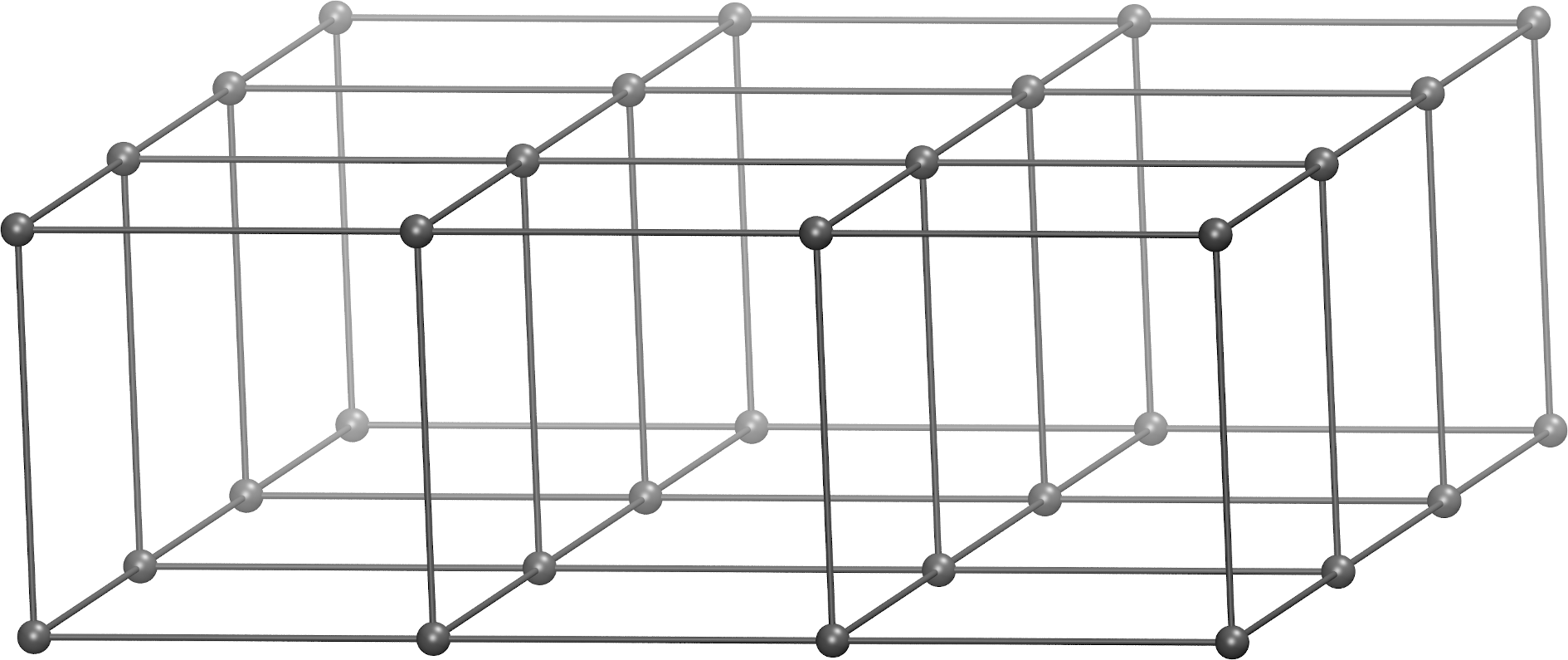}
\caption{A finite portion of the square ladder.}
\label{f:sql}
\end{figure}
\vspace{-2em}
\end{center}


\noindent \emph{3-Periodic graphs}. Following Corollary~\ref{cor2}, Cayley graphs of space groups in any dimension can be constructed so that $B_o(\Gamma) \neq \{e\}$. Such graphs are rare in crystallography mostly because of their undesirable geometrical properties that is, however, not always the case (see next section). Let us generalize the example of $K_{3,3}$ (\emph{cf.} the beginning of Section~\ref{auts}) to space group $G = P321$:

\begin{center}
\begin{tabular}{l}
$S = \langle 2(x00), 2(0y0), 2(xx0), t_x, t_y, t_xt_y, t_z \rangle$;\\ 
$Stab_G(S) \cong D_3$.
\end{tabular}
\end{center}

\noindent The respective Cayley graph has valency 11. All its vertices collide in the barycentric placement to the points of the hexagonal Bravais lattice. The computation of vertex stabilizer \cite{Bab20} shows that $Stab_{Aut(\Gamma)}(x) \cong (C_2)^3 \times D_3 \times D_3$, and hence the index $|Aut(\Gamma) : G| = 288$.

Another generating set for $P321$ can be constructed by extending that of the $\bf 4^4(0,3)$ tube (Section~\ref{rod1}):

\begin{center}
\begin{tabular}{l}
$S = \langle 3(00z), 2(x00), 2(x0\frac{1}{2}), t_x \rangle$;\\
$Stab_G(S) = \{e\}$;\\
$Stab_{N_{A}(G)}(S) = \langle 2(00z) \rangle$.
\end{tabular}
\end{center}

\noindent Here the computation of vertex stabilizer yields the same result as the conjugation action of the affine normalizer $N_A(P321) = P6/mmm \, ({\bf{a}}, {\bf{b}}, \frac{1}{2}{\bf{c}})$. Therefore, by adding the vertex stabilizer to $S$ we get: $Aut(\Gamma) \cong P622$, as expected from Theorem~\ref{teo1}, though the graph displays collisions in the barycentric placement.

\subsubsection{3-Periodic sphere-packing graphs}

Motivated by the above examples, we analyzed all three-periodic vertex-transitive sphere-packing graphs published by W.~Fischer and collaborators with respect to bounded automorphisms of finite order. To this end, using the computer algebra system $\GAP$, a code was written which -- given a generating set $S$ of a crystallographic group $G$ -- computes its stabilizer in $G$, $Stab_G(S)$, from a sufficiently large finite quotient $G/T^p \, (p \in \N)$ making use of the conjugacy separability property of space groups \cite{Dere26}. The code exploits the functionalities of $\GAP$ packages \emph{Cryst} \cite{Cryst2019} and \emph{Polycyclic}~\cite{Polyc2020}. The results are collected in Table~\ref{t:table4}. For the nomenclature of graphs see Appendix~B. The generating sets correspond to those listed by Fischer \cite{Fischer74, Fischer93} and Sowa \cite{Sowa2012}. $Stab_{Aut(\Gamma)}(x)$ was computed as described by Baburin (2020). The last column contains the isomorphism type of the quotient graph, $\Gamma/\sigma$, with respect to the partition $\sigma$ of $V(\Gamma)$ into orbits of $Stab_G(S)$.

{\tiny
\begin{table}[H]
\begin{center}
\caption{Sphere-packing graphs with bounded automorphisms of finite order}
\label{t:table4}
\begin{tabular}{|l|l|c|c|c|}
\hline
Name & Gen. set $S$ & $Stab_G(S)$ & $Stab_{Aut(\Gamma)}(x)$ & $\Gamma/\sigma$ \\
\hline
\multicolumn{5}{|c|}{$I432$} \\
\hline
\rule{0pt}{2.5ex} $3/4/c3$  & $4(x00), 2(\frac{1}{2}-x, \frac{1}{4}, x)$ & $2(\frac{1}{2}y0)$ & $C_2 \times C_2$ & $3/4/c1$ \\
\rule{0pt}{2.5ex} $4/4/c13$ & $4(x00), 2(\frac{1}{2}+x, \frac{1}{4}, x), 2(\frac{1}{2}-x, \frac{1}{4}, x)$ & $2(\frac{1}{2}y0)$ & infinite & $''$ \\
\rule{0pt}{2.5ex} $4/4/c15$ & $4(x00), 2(\frac{1}{2}-x, \frac{1}{4}, x), 2(\frac{1}{2}y0)$ & $2(\frac{1}{2}y0)$ & $C_2 \times C_2$ & $''$ \\
\rule{0pt}{2.5ex} $5/3/c21$ & $4(x00), 2(\frac{1}{2}+x, \frac{1}{4}, x), 2(\frac{1}{2}y0),$ & $2(\frac{1}{2}y0)$ & infinite &  $''$ \\
& $2(\frac{1}{2}-x, \frac{1}{4}, x)$ & & & \\
\hline
\multicolumn{5}{|c|}{$I4_132$} \\
\hline
\rule{0pt}{2.5ex} $4/3/c22$          & $2(\frac{1}{4}y0), 3(xxx), 2(\frac{1}{4}+x, \frac{1}{8}, x)$ & $2(\frac{1}{4}-x, \frac{1}{8}, x)$ & infinite & $3/3/c1$ \\
\rule{0pt}{2.5ex} $\bolit {4/3/c25}$ & $3(xxx), 2(\frac{1}{4}+x, \frac{1}{8}, x), 2(\frac{1}{4}-x, \frac{1}{8}, x)$ & $2(\frac{1}{4}-x, \frac{1}{8}, x)$ & $C_2$ & $''$ \\
\rule{0pt}{2.5ex} $\bolit {4/3/c26}$ & $2(\frac{1}{4}y0), 3(xxx), 2(\frac{1}{4}-x, \frac{1}{8}, x)$ & $2(\frac{1}{4}-x, \frac{1}{8}, x)$ & $C_2$ & $''$ \\
\rule{0pt}{2.5ex} $5/3/c29$          & $2(\frac{1}{4}y0), 3(xxx), 2(\frac{1}{4}+x, \frac{1}{8}, x),$ & $2(\frac{1}{4}-x, \frac{1}{8}, x)$ & infinite & $''$ \\
\rule{0pt}{2.5ex} & $2(\frac{1}{4}-x, \frac{1}{8}, x)$ & & & \\
\hline
\multicolumn{5}{|c|}{$I4_122$} \\
\hline
\rule{0pt}{2.5ex} $\bolit {4/4/t43}$ & $4_3(\frac{1}{4}\frac{1}{4}z), 2(00z), 2(xx0)$ & $2(xx0)$ & $D_3$ & $3/10/c1$ \\
\rule{0pt}{2.5ex} $\bolit {5/4/t45}$ & $4_3(\frac{1}{4}\frac{1}{4}z), 4_1(\frac{1}{4}\frac{\bar1}{4}z), 2(\frac{1}{4}y\frac{3}{8})$ & $2(\frac{1}{4}y\frac{3}{8})$ & $C_2 \times C_2$ & $4/4/t1$ \\
\hline
\multicolumn{5}{|c|}{$I4_1/amd$} \\
\hline
\rule{0pt}{2.5ex} $\bolit {4/4/t53}$ & $4_3(\frac{1}{4}\frac{1}{4}z), m(x0z), 2(\frac{1}{4}y\frac{3}{8})$ & $2(\frac{1}{4}y\frac{3}{8})$ & $C_2$ & $3/8/t1$ \\
\hline
\multicolumn{5}{|c|}{$I\bar42d$} \\
\hline
\rule{0pt}{2.5ex} $\bolit {5/4/t45}$ & $d(x,\frac{1}{4}-x,z), d(x,x-\frac{1}{4},z), 2(\frac{1}{4}y\frac{3}{8})$ & $2(\frac{1}{4}y\frac{3}{8})$ & $C_2 \times C_2$ & $4/4/t1$ \\
\hline
\multicolumn{5}{|c|}{$I4_1/a$} \\
\hline
\rule{0pt}{2.5ex} $4/4/t39$ &  $\bar4_z(000), b(xy\frac{1}{8})$ & $2(00z)$ & infinite & $3/10/t4$ \\
\rule{0pt}{2.5ex} $5/3/t24$ &  $\bar4_z(000), b(xy\frac{1}{8}), 2(00z)$ & $2(00z)$ & infinite & $3/10/t4$ \\
\rule{0pt}{2.5ex} $\bolit {5/4/t45}$ &  $4_3(\frac{1}{4}\frac{1}{4}z), 4_1(\frac{1}{4}\frac{\bar1}{4}z), \bar1(\frac{1}{4}0\frac{3}{8})$ & $\bar1(\frac{1}{4}0\frac{3}{8})$ & $C_2 \times C_2$ & $4/4/t1$ \\
\hline
\multicolumn{5}{|c|}{$Fddd$} \\
\hline
\rule{0pt}{2.5ex} $\bolit {4/4/o18}$ &  $2(0y\frac{1}{2}), d(xy\frac{3}{8}), 2(00z)$ & $2(00z)$ & $C_2 \times C_2$ &  $3/10/t4$ \\
\rule{0pt}{2.5ex} $4/4/o19$ &  $2(\frac{1}{4}y\frac{1}{4}), 2(x\frac{1}{4}\frac{1}{4}), d(xy\frac{3}{8})$ & $2(\frac{1}{4}\frac{1}{4}z)$ & infinite & $3/10/t4$ \\
\rule{0pt}{2.5ex} $5/3/o6$  &  $2(\frac{1}{4}y\frac{1}{4}), 2(x\frac{1}{4}\frac{1}{4}), d(xy\frac{3}{8}), 2(\frac{1}{4}\frac{1}{4}z)$ & $2(\frac{1}{4}\frac{1}{4}z)$ & infinite & $3/10/t4$ \\
\hline
\end{tabular}
\end{center}
\end{table}
}

Out of 1080 sphere-packing graphs, there are 16 graphs\footnote{Note that graphs $\{4/3/c25, 4/3/c26\}$, $\{4/4/t39, 4/4/o19\}$, $\{5/3/t24, 5/3/o6\}$ are pairwise isomorphic.} which possess bounded automorphisms of finite order. Six of them are ladder graphs (highlighted in bold in Table~\ref{t:table4}), and others are related to them by addition or removal of some edges. It is noteworthy that almost half of the graphs have uncountable automorphism groups. Note that our results agree in part with those of Eon (2013) \cite{Eon2013} who considered $4/3/c25\text{-}26$, $4/4/o18\text{-}19$ and $5/3/o6/t24$ using the formalism of labelled quotient graphs.

In all the cases the stabilizer of the generating set has order~2, and therefore the orbits of (some subgroup of) $B_o(\Gamma)$ are always two-element blocks. Moreover, in most cases the stabilizer is contained in the generating set, and could be also determined from geometrical considerations. Qualitatively speaking, this stabilizer is an `echo' of centralizing elements in $Aut(\Gamma)$ which always correspond to bounded automorphisms \cite{Gromov83}.

Among the considered graphs, the sphere packing $3/8/t6$ (not in Table~\ref{t:table4}) deserves some comments. Fischer (1993) found it only in the general position of space group $I\bar42d$. The set of generators can be chosen as $\{2(x\frac{1}{4}\frac{1}{8}), d(x, \frac{1}{4}-x, z)\}$, the stabilizer of which is trivial. By Theorem~\ref{teo1}, $Aut(\Gamma)$ should be isomorphic to a space group. The graph has collisions in the barycentric placement that is, however, neighbour-unique. This allows us to get $Aut(\Gamma) \cong I4_1/amd$. The same result is obtained if the stabilizer of the generating set is referred to the affine normalizer of $I\bar42d$. The vertex stabilizer has order 2, and is unexpectedly (for valency 3) generated by the inversion through $(0\frac{1}{4}\frac{1}{8})$ forcing vertices to collide in $\R^3$ to the invariant position $I4_1/amd \, 8e \, (.2/m.)$. Therefore, regular groups of $3/8/t6$ must be noncentrosymmetric ($I\bar42d, I4_122, I4_1md, I\bar4m2$). Edge intersections are retained in the groups with mirror planes. Space groups $I\bar42d$ and $I4_122$ allow for piecewise-linear embeddings of the graph in $\R^3$. The first variant is realised with shortest equal distances and tabulated by Fischer (1993), the second [generators: $2(x\frac{1}{4}\frac{1}{8}), 2(xx0), 2(x,\frac{1}{2}+x, \frac{1}{4})$] does not correspond to a sphere packing. Both embeddings are compared in Fig.~\ref{f:beide}.

\begin{center}
\begin{figure}[h]
\centering
\includegraphics{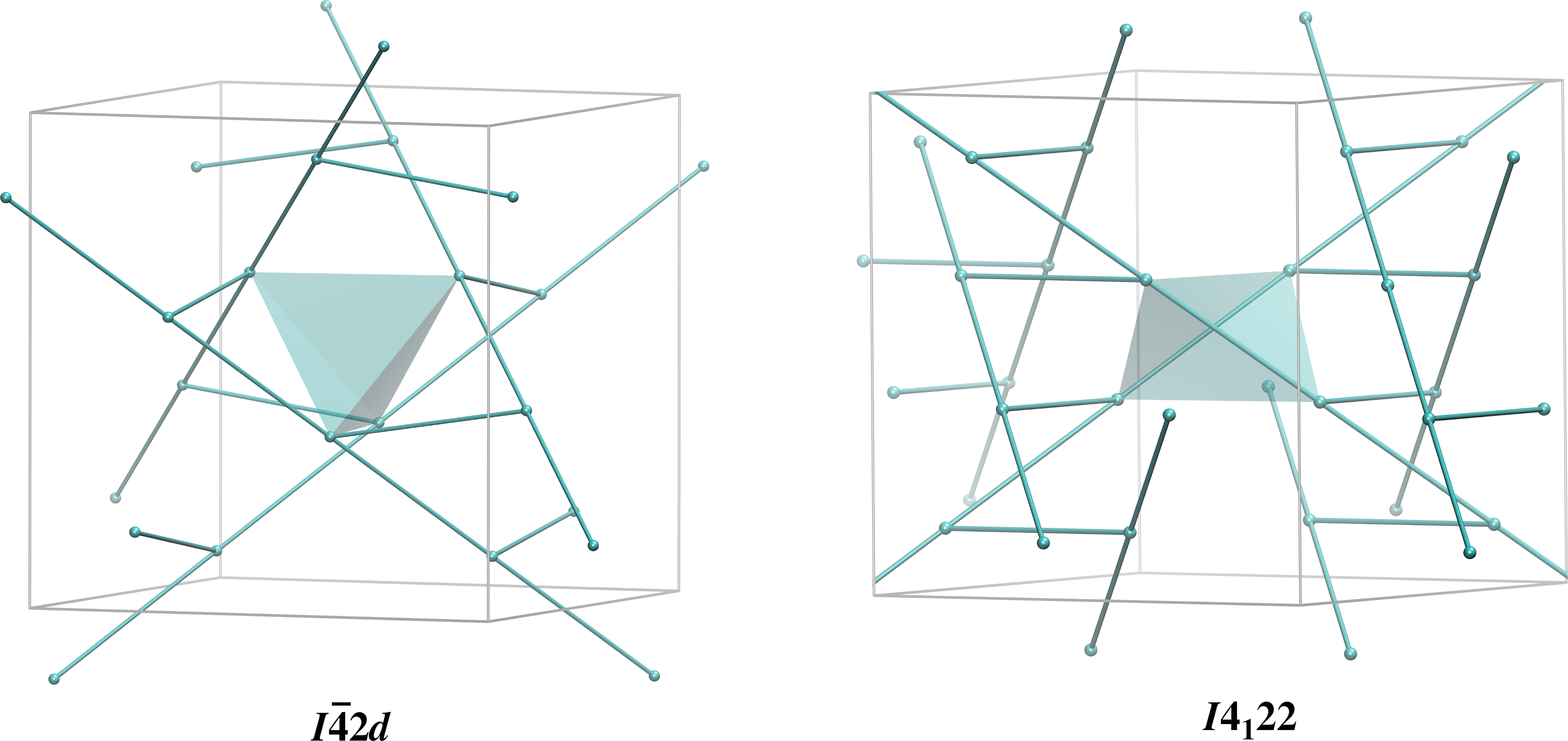}
\caption{Embeddings of the $3/8/t6$ sphere-packing graph with different symmetry. The tetrahedral void at the center of the unit cell is emphasized in both structures. Notice also different point-group symmetry of the tetrahedra ($\bar4$ \emph{versus} $222$).}
\label{f:beide}
\end{figure}
\end{center}
\vspace{-2em}

The pair of sphere packings $4/3/c25\text{-}26$ is remarkable as they arise from non-isotopic embeddings of one and the same ladder graph built from the two copies of $3/3/c1$. This raises a natural question -- given a three-periodic vertex-transitive ladder graph, how to find its (piecewise linear) vertex-transitive embeddings in $\R^3$? A practical algorithm for this problem is suggested in the next section.

\subsubsection{Construction of 3-periodic \emph{ladder graphs} from $\R^4$}

A suitable (piecewise-linear) embedding of a periodic graph in $\R^d$ may be obtained in two steps: (a) compute a symmetry group of the embedding based on the knowledge of $Aut(\Gamma)$, (b) given a symmetry group, find coordinates which are compatible with it and are optimal in some sense. The first step is group-theoretic, the second is numerical. It is the first step that we address here for 3-periodic vertex-transitive ladder graphs in $\R^3$ (this topic has been considered recently by O'Keeffe \emph{et al.} \cite{OK25} in an essentially empirical way). The second step is carried out by setting up a system of quadratic equations for the distances to be solved usually by least-squares methods.

We start with a 3-periodic vertex-transitive graph $\Gamma$ with nontrivial vertex-stabilizers in $Aut(\Gamma)$ that is isomorphic to a space group in $\R^3$, lift it to a 3-dimensional hyperplane of $R^4$, say, $w = 1/5$, and the reflection in the hyperplane $w = 0$ generates a second copy of it. In this way a 3-periodic ladder graph $\Lambda$ is obtained whose $Aut(\Lambda) \cong Aut(\Gamma) \times C_2$, and vertex stabilizers are isomorphic to those of the parent graph $\Gamma$ in $Aut(\Gamma)$. Now, compute proper vertex-transitive subgroups of $Aut(\Lambda)$ as subperiodic groups in $\R^4$ (with a three-dimensional translation lattice), and project them back to $\R^3$ by restricting symmetry operators to the respective subspace. The groups not containing the reflection at $w=0$ are precisely space groups for vertex-transitive embeddings of $\Lambda$ in $\R^3$.

\medskip

\noindent \emph{Example}. Construct a ladder graph for $\Gamma = 3/3/c1$ using the proposed approach. We have:

\begin{center}
\begin{tabular}{l}
$Aut(\Gamma) = I4_132, Stab_{Aut(\Gamma)}(x) \cong C_2$;\\
$Aut(\Lambda) \cong I4_132 \times C_2, Stab_{Aut(\Lambda)}(x) \cong C_2$.
\end{tabular}
\end{center}

\noindent It can be shown by computation that the group $Aut(\Lambda)$ has two subgroups of index~2, $G_1$ and $G_2$, without a mirror plane at $w=0$ (\emph{cf.}~above). They are the two vertex-transitive (and in this case necessarily regular) subgroups of $Aut(\Lambda)$. In terms of group structure, $Aut(\Lambda)$, $G_1$ and $G_2$ are quotients of four-dimensional space groups $25/10/2/3$, $24/4/3/3$ and $25/10/1/8$ (in the notation of \cite{BBNWZ}), respectively, obtained by factoring out the normal translation subgroup of rank~1. The point groups are $P(G_1) \cong S_4$ and $P(Aut(\Lambda)) = P(G_2) \cong S_4 \times C_2$, hence $G_1$ and $G_2$ are \emph{translationengleiche} and \emph{klassengleiche} subgroups of $Aut(\Lambda)$, resp. Upon restricting symmetry operators to $\R^3$, the glide reflection of $G_2$, $\{m_{xyz}|\frac{1}{2}\frac{1}{2}\frac{1}{2}0\}$, is mapped to the $I$-translation of $I4_132$ with a simultaneous reduction of the point group, and finally we get $G_1 \cong G_2 \cong I4_132$. Note that $G_1$ corresponds to $4/3/c26$ and $G_2$ to $4/3/c25$. They are illustrated in Fig.~\ref{f:ulds}.


\begin{center}
\begin{figure}[h]
\centering
\includegraphics{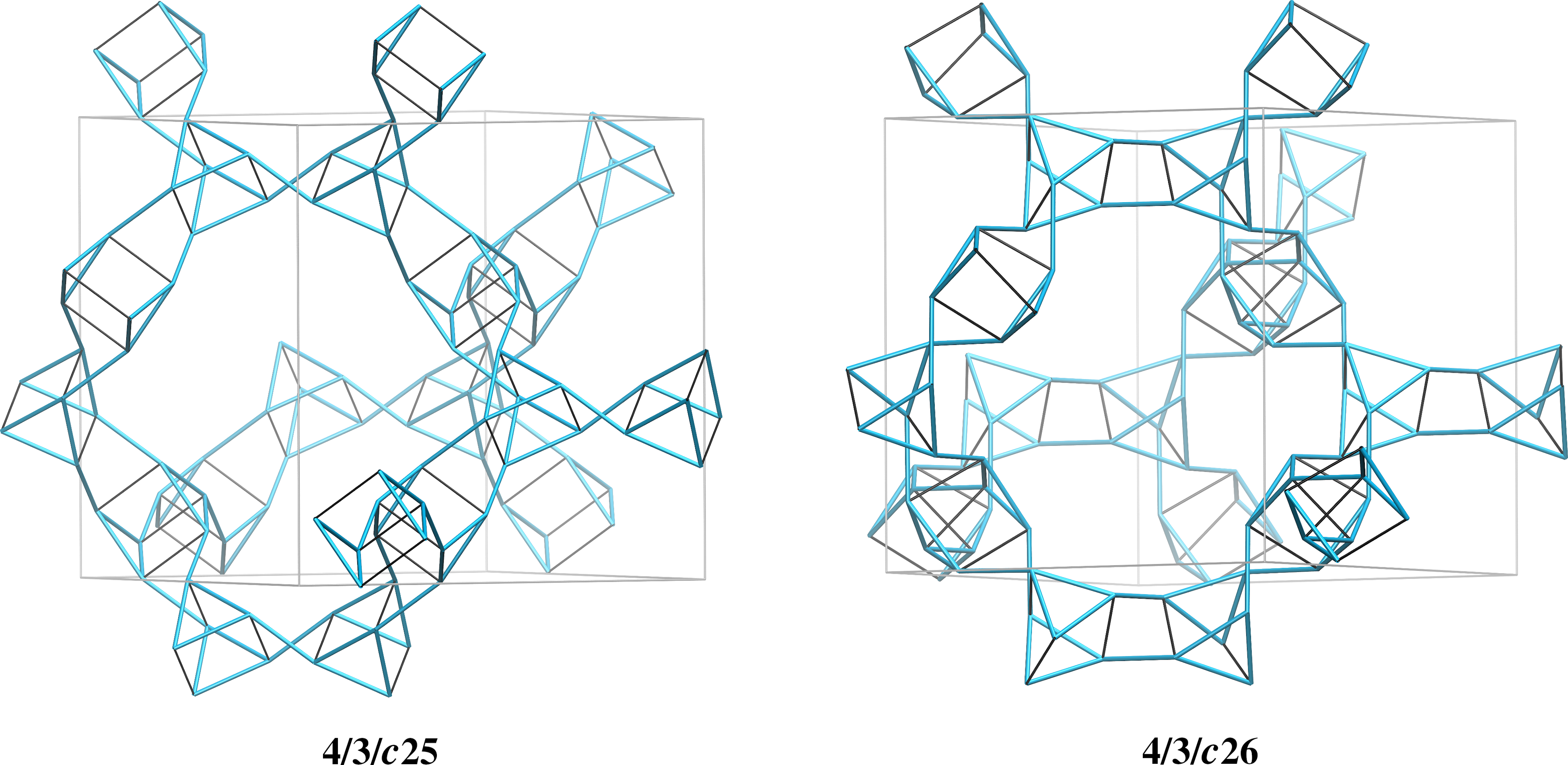}
\caption{$4/3/c25$ and $4/3/c26$ sphere-packing graphs. Thin black edges interlink the two connected components.}
\label{f:ulds}
\end{figure}
\end{center}
\vspace{-2em}

To demonstrate the capability of the proposed approach, in Table~\ref{t:table5} regular automorphism groups $G(\Lambda)$ are given for ladder graphs built from selected graphs of different symmetry (only regular groups are listed in order to save space, and moreover, they are helpful for understanding the method). Therein, the last element of a generating set in the fourth column is always one exchanging the two 3-periodic subgraphs isomorphic to $\Gamma$, \emph{i.e.} it is the element of the coset $G(\Lambda)/Stab_{G(\Lambda)}(\Gamma)$. Observe that this coset must not contain mirror reflections because otherwise edge intersections are enforced in the embedding \cite{Bab16}. For example, with symmetry $Pn\bar3m$ the ladder graph can be constructed from $3/4/c1$ in two different ways. Consider the variant corresponding to the first entry of Table~\ref{t:table5}. In this case $Stab_G(\Gamma) \cong Fd\bar3c$, \emph{i.e.} the group without diagonal mirrors of $G=Pn\bar3m$. As a result, this generating set always leads to edge intersections in the embedding of $\Lambda$.

{\tiny
\begin{table}[H]
\begin{center}
\caption{Examples of ladder-graph construction}
\label{t:table5}
\begin{tabular}{|c|l|l|l|l|}
\hline
$\Gamma$ & $G(\Lambda)$ & $Stab_{G(\Lambda)}(\Gamma)$ & Generators for $G(\Lambda)$ & Source/ \\
& & & & related graph \\
\hline
\rule{0pt}{2.5ex} $3/4/c1$ & $Pn\bar3m$ & $Fd\bar3c \, (2{\bf{a}})$ & $\bar4_z(00\frac{1}{2}), 2(\frac{1}{4},y,\frac{1}{2}+y), 2(x0\frac{1}{2})$ & intersect. \\
	 &            & $Fd\bar3m \, (2{\bf{a}})$ & $m(xxz), m(x\bar{x}z), 2(\frac{1}{4}, y, \frac{\bar1}{2}-y), 2(x0\frac{\bar1}{2})$ & \cite{Blatov07} \\
$3/8/h1$ & $R\bar3m$  & $R3m$ & $3_2(\frac{1}{3}0z), m(x, 2x-1, z), 2(x00)$ & -- \\
	 &            & $R\bar3m \, (2{\bf{c}})$ & $2(x,x-\frac{1}{3},\frac{1}{3}), 2(\frac{1}{3}y\frac{2}{3}), m(x, 2x-1, z), 2(x0\frac{1}{2})$ & \cite{Blatov07} \\
$3/8/t1$ & $I4_1/amd$ & $I4_1md$ & $4_3(\frac{1}{4}\frac{1}{4}z), m(0yz), 2(x\frac{1}{4}\frac{1}{8})$ & $4/4/t53$ \\
         &            & $I\bar4m2$ & $2(xx0), m(0yz), 2(x, \frac{1}{2}-x, \frac{1}{4}), 2(x\frac{1}{4}\frac{1}{8})$ & -- \\
\rule{0pt}{2.5ex} $3/10/c1$ & $I4_122$  & $I2_12_12_1$ & $2(x\frac{1}{4}\frac{\bar3}{8}), 2(\frac{1}{4}y\frac{\bar1}{8}), 2(\frac{1}{2}0z), 2(x, \frac{1}{2}-x, \frac{\bar1}{4})$ & $4/3/c26$ \\
\rule{0pt}{2.5ex}  &  & $I4_1$ & $4_3(\frac{1}{4}\frac{1}{4}z), 2(\frac{1}{2}0z), 2(x, \frac{1}{2}-x, \frac{\bar1}{4})$ & $4/4/t43$ \\
          &           & $P4_32_12$ & $4_3(\frac{1}{4}\frac{1}{4}z), 2(x, x-\frac{1}{2}, \frac{\bar1}{4}), 2(x, \frac{1}{2}-x, \frac{\bar1}{4})$ & $4/3/c25$ \\
\rule{0pt}{2.5ex} &   & $P4_122$ & $2(x\frac{1}{4}\frac{\bar3}{8}), 2(\frac{1}{4}y\frac{\bar1}{8}), 2(x, x-\frac{1}{2}, \frac{\bar1}{4}), 2(x, \frac{1}{2}-x, \frac{\bar1}{4})$ & -- \\
\rule{0pt}{2.5ex} $3/12/h1$ & $P6_222$  & $P6_2$   & $6_2(00z), 2(\frac{1}{2}0z), 2(x00)$ & -- \\
          &           & $P3_212$ & $2(x\bar{x}\frac{\bar1}{6}), 2(2x,x,\frac{1}{6}), 2(x, 2x-1, 0), 2(x00)$ & -- \\
          &           & $P6_422 \, (2{\bf{c}})$ & $2(\frac{1}{2}0z), 2(x\bar{x}\frac{1}{3}), 2(2x, x, \frac{2}{3}), 2(x0\frac{1}{2})$ & -- \\
          &           & $P6_122 \, (2{\bf{c}})$ & $6_2(00z), 2(x, 2x-1, \frac{1}{2}), 2(x0\frac{1}{2})$ & -- \\
\hline
\end{tabular}
\end{center}
\end{table}
}

Table~\ref{t:table5} shows that known ladder graphs are reproduced, and more are found. For example, a new embedding is derived for the ladder graph of $3/10/c1$ (see coordinates in the Supporting information). The ladder graphs for $3/4/c1$ and $3/8/h1$ from Blatov (2007) \cite{Blatov07} are reproduced which (in his nomenclature) should be named {\bf nbo-g}-4-$Pn\bar3m$ and {\bf smd}-4-$R\bar3m$, resp. Our method also allows to clarify why some graphs could not be encountered in former studies: for example, the ladder graph of $3/6/h1$ (a cubic lattice with nodes replaced by hexagons) admits only embeddings with edge intersections because mirror reflections always belong to the cosets $G(\Lambda)/Stab_{G(\Lambda)}(\Gamma)$.

\section{Appendix A: Some more group theory}

There exists a deep connection between the structure of a graph and the subgroup structure of its automorphism group. The asymptotic behaviour of the growth function of a graph (coordination sequence in crystallography) is the property that determines this connection. For vertex-transitive graphs, the problem can be put into a group-theoretic setting as described below (here it is enough to keep in mind that the growth of a group is equivalent to that of its Cayley graph).

\begin{theorem}
\label{teo3}
\emph{(Gromov, 1981)}. If $G$ is a finitely generated group with polynomial growth, then it contains a nilpotent subgroup of finite index.
\end{theorem}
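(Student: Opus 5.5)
This is Gromov's theorem, so a complete proof is far beyond a plan. I would follow the now-standard route of Kleiner, as streamlined by Shalom--Tao, rather than Gromov's original limit-space argument. The proof proceeds by induction on the polynomial growth degree $d$, with $|B(n)| \leq Cn^d$ in a Cayley graph of $G$ with respect to a finite symmetric generating set. The inductive goal is to produce a finite-index subgroup $G_1 \leq G$ together with a surjection $G_1 \to \Z$. Granting this, the Milnor--Wolf argument finishes the induction. The kernel $K$ of $G_1 \to \Z$ is finitely generated, because a group of polynomial growth cannot have an infinitely generated kernel over $\Z$: otherwise the growth would be exponential. Moreover $K$ has growth degree at most $d-1$. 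By induction $K$ is virtually nilpotent, and then $G_1$, being an extension of a virtually nilpotent group by $\Z$, is virtually polycyclic. A virtually polycyclic group of polynomial growth is virtually nilpotent by Wolf's theorem.

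The heart of the proof is producing the homomorphism onto $\Z$. I would first dispose of the case where $G$ is virtually abelian or finite, which is trivial. Otherwise the plan is as follows.

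First, I would show that $G$ admits a nontrivial isometric action on a Hilbert space without fixed points. This uses the failure of Kazhdan's property (T) for infinite amenable groups: groups of polynomial growth are amenable, so by Shalom's theorem they have a non-vanishing reduced $1$-cohomology $\bar H^1(G,\pi)$ for some unitary representation $\pi$.

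Second, I would take a harmonic cocycle, that is, an equivariant harmonic map $f\colon G \to \mathcal H$.

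Third, I would apply Kleiner's theorem, which says that on a group of polynomial growth the space of Lipschitz harmonic functions is finite-dimensional. This rests on a Poincaré inequality on balls together with the doubling property coming from polynomial growth. Applied to the coordinate functions $\langle f, v\rangle$, it shows that $G$ acts on a finite-dimensional space $V$ through a linear representation whose image is nontrivial.

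Finally, I would invoke the Tits alternative for linear groups. Since $G$ has polynomial growth, the image of $G$ in $GL(V)$ is virtually solvable, and a finitely generated infinite virtually solvable group of polynomial growth virtually surjects onto $\Z$. If the image is finite, I would pass to the finite-index kernel of the linear action. There the cocycle becomes a homomorphism into the additive group $V$, and it is nonzero; this yields a homomorphism onto $\Z$ after passing to a finite-index subgroup.

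The main obstacle is Kleiner's finite-dimensionality theorem, the third step above. Its proof requires a scale-invariant Poincaré inequality and a delicate covering argument on balls that uses polynomial growth only at infinitely many scales. I would cite it rather than reprove it. The remaining steps (Shalom's cohomological criterion, the Tits alternative, and Milnor--Wolf) are quotable standard results. In the context of this paper, the theorem is of course taken from \cite{Gromov81} and not reproved, and the plan above indicates only how its proof is organized.
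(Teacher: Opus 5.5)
Your outline is a correct sketch of Kleiner's proof in the Shalom--Tao form. It is a genuinely different route from the one behind the paper's statement: the paper gives no proof and simply quotes the theorem from Gromov (1981).

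\textbf{Comparison with Gromov's proof.} Gromov's original argument shares your outer shell: induction on the growth degree, reduction to finding a finite-index subgroup that surjects onto $\Z$, Milnor's lemma on the kernel, the Tits alternative, and the Milnor--Wolf theorem. Where it differs is in how it produces the needed representation. It rescales the word metric and extracts a Gromov--Hausdorff limit of $(G,\frac{1}{n}d)$, which is a locally compact, connected, finite-dimensional homogeneous space. It then uses the Montgomery--Zippin solution of Hilbert's fifth problem to see that the isometry group of this limit is a Lie group. This yields a homomorphism from a finite-index subgroup of $G$ into a Lie group with infinite image.

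Your route replaces limit spaces and Hilbert's fifth problem with Shalom's reduced-cohomology criterion and the finite-dimensionality of Lipschitz harmonic functions. This is more elementary and more quantitative: Shalom--Tao need the growth bound only at one sufficiently large scale and obtain explicit index bounds. Gromov's route, in exchange, gives geometric information about the asymptotic cone. Note also that every group to which the paper actually applies the statement (space, rod and layer groups) contains a lattice $\Z^r$ of finite index by definition, so the conclusion is immediate there. The full theorem enters only indirectly, through Trofimov's theorem quoted in Appendix~A.

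\textbf{Details to tighten.}
\begin{itemize}
\item You say the linear representation has nontrivial image, but this need not hold. For $G=\Z^n$ one may have $\pi$ trivial and $b$ a nonzero homomorphism to $\R$. This is harmless, since your finite-image case covers it.
\item The sentence ``the cocycle becomes a homomorphism into $V$'' needs $V$ pinned down. Normalize so that $f=b$ and take $V=\operatorname{span} b(G)\subset\mathcal H$. It is $\pi(G)$-invariant because $\pi(g)b(h)=b(gh)-b(g)$. It is finite-dimensional because $v\mapsto\langle b(\cdot),v\rangle$ embeds it into the space of Lipschitz harmonic functions.
\item That $b$ is nonzero on the kernel $G_1$ needs an argument. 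Otherwise $b$ is constant on the finitely many cosets $gG_1$. Then each coordinate $\langle b(\cdot),v\rangle$ attains its maximum, so it is constant by the maximum principle, forcing $b\equiv b(1)=0$.
\item Global doubling $|B(2R)|\le K|B(R)|$ cannot be assumed from $|B(R)|\le CR^d$ without circularity. It follows only a posteriori, from Gromov's theorem together with Bass--Guivarc'h. Kleiner's argument uses doubling only at scales obtained by pigeonholing, as your later remark about infinitely many scales correctly reflects.
\end{itemize}
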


\noindent This means that if $\Gamma$ is a Cayley graph of $G$, then a nilpotent subgroup of $G$ acts on $V(\Gamma)$ with a finite number of orbits. Note that the class of torsion-free nilpotent groups of growth rate $d$ coincide with that of free abelian groups $\Z^d$ for $d \leq 3$ \cite{Seif97}. However, not all graphs are Cayley graphs, \emph{i.e.} not all allow for a regular group of automorphisms. The following theorem covers this case and, more importantly, is applicable to graphs with uncountable automorphism groups.

\begin{theorem}
\label{teo4}
\emph{(Trofimov, 1984)}. Let $\Gamma$ be a vertex-transitive graph with polynomial growth. Then there exists a partition $\sigma$ of $V(\Gamma)$ into finite blocks $x^\sigma$ such that the group $Aut(\Gamma/\sigma)$ is finitely generated and contains a nilpotent subgroup of finite index. Moreover, $Stab_{Aut(\Gamma/\sigma)}(x^\sigma)$ is finite.
\end{theorem}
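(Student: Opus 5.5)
The plan is to pass to the full automorphism group viewed as a topological group and to reduce to Gromov's theorem (Theorem~\ref{teo3}). Let $A = Aut(\Gamma)$ carry the topology of pointwise convergence on $V(\Gamma)$. Since $\Gamma$ is connected and locally finite, $A$ is a totally disconnected locally compact group. Its vertex stabilizers $Stab_A(x)$ are compact open subgroups. Because $A$ is vertex-transitive, $A$ is compactly generated: it is generated by $Stab_A(x)$ together with finitely many elements sending $x$ to its neighbours.

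First I would transfer polynomial growth from $\Gamma$ to $A$. With a compact generating set $U = Stab_A(x)\,\{g_1,\dots,g_k\}\,Stab_A(x)$, the power $U^n$ is the set of $g$ with $d(x, xg) \le n$ (up to a shift of $n$). Hence the Haar measure of $U^n$ equals $\mu(Stab_A(x))\cdot |B(x,n)|$, which is bounded by $c\,n^r$. So $A$ is a compactly generated locally compact group of polynomial growth. I would then invoke Losert's extension of Gromov's theorem: there is a compact normal subgroup $K \trianglelefteq A$ such that $A/K$ is a Lie group, and $A/K$ is virtually nilpotent. A totally disconnected Lie group is discrete, so $A/K$ is discrete and finitely generated. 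The generation statement follows from compact generation; the virtual nilpotence follows from Gromov's theorem applied to the discrete group of polynomial growth.

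Next I would define $\sigma$ as the partition of $V(\Gamma)$ into $K$-orbits. Since $K$ is normal, these orbits form an $A$-invariant system of blocks. Each orbit $xK$ is finite, because it is compact (the image of $K$ under the orbit map) and discrete, the stabilizer $Stab_K(x)$ being open in $K$. The quotient graph $\Gamma/\sigma$ is vertex-transitive under $A/K$ and still has polynomial growth. The image of $Stab_A(x)$ in the discrete group $A/K$ is compact, hence finite, so $A/K$ acts on $\Gamma/\sigma$ with finite stabilizers.

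The main obstacle is that the theorem is stated for the full group $Aut(\Gamma/\sigma)$, not for the image $A/K$. Quotienting can create new automorphisms, possibly with infinite stabilizers. To handle this I would choose $K$ canonically, as the largest compact normal subgroup of $A$; this exists because $A/K$ is discrete and virtually nilpotent, so compact normal subgroups containing $K$ correspond to finite normal subgroups of $A/K$, which have a maximal element. If $Aut(\Gamma/\sigma)$ still had infinite stabilizers, I would repeat the construction on $\Gamma/\sigma$ and compose the partitions. Termination would come from observing that each nontrivial step strictly reduces the vertex count of the quotient of a fixed finite ball, or equivalently that the discrete quotient group has bounded finite normal subgroups. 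Should this iteration be delicate, the fallback is Trofimov's original argument: compare $Aut(\Gamma/\sigma)$ with $A/K$ via the rank of the nilpotent part, using the fact that blocks of imprimitivity with finite blocks form a lattice closed under joins.
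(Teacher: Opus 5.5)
The paper gives no proof of this statement; it is quoted from Trofimov. In the form in which that theorem is usually stated, the conclusion concerns the group induced by $Aut(\Gamma)$ on the blocks, not the full group $Aut(\Gamma/\sigma)$.

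Your first three paragraphs correctly prove that induced-group form, along the standard route through Losert's theorem. With $U=\{g: d(x,xg)\le 1\}$ one has exactly $U^n=\{g: d(x,xg)\le n\}$. A totally disconnected Lie group is discrete. The $K$-orbits are finite blocks, and $A/K$ acts on $\Gamma/\sigma$ with finite stabilizers.

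You also correctly identify the obstacle. The gap is in how you handle it. Taking $K$ to be the largest compact normal subgroup does not help, because automorphisms of the quotient need not come from $A$. Here is a counterexample:
\begin{itemize}
\item Let $D_\infty=\langle a,b\mid a^2,(ab)^2\rangle$, let $z$ generate a central $C_2$, and let $\Gamma=\mathrm{Cay}(D_\infty\times C_2,\{a,b^{\pm1},ab,baz\})$.
\item The $baz$-edges are exactly the edges lying in no triangle. Deleting them leaves two copies of the square of the bi-infinite path.
\item Index each copy by $\mathbb{Z}$ via $b^p z^\epsilon\mapsto 2p$ and $ab^{-p}z^\epsilon\mapsto 2p+1$. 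The $baz$-matching becomes $i\mapsto i+3(-1)^i$. This is compatible only with even translations and with reflections $i\mapsto c-i$, $c$ odd. Hence $Aut(\Gamma)=D_\infty\times C_2$, acting regularly.
\item Its largest compact (i.e.\ finite) normal subgroup is therefore $\langle z\rangle$. Yet $\Gamma/\langle z\rangle$ is the paper's graph $\mathbf{3^6(0,2)}$, whose vertex stabilizers are infinite.
\end{itemize}
So iteration is unavoidable, and your termination arguments are not justified:
\begin{itemize}
\item Nothing bounds, uniformly in the step, the distance between vertices identified at that step. So there is no reason for the image of a fixed ball to shrink.
\item A bound on the finite normal subgroups of one discrete group says nothing, because the group changes at every step.
\item The fallback rests on a false claim. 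For a regular action, systems of imprimitivity with finite blocks correspond to finite subgroups, and two reflections of $D_\infty$ generate an infinite group. So joins of finite-block systems can have infinite blocks.
\end{itemize}

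The missing idea is an invariant that strictly decreases; here is one that works.
\begin{itemize}
\item Put $\Gamma_1=\Gamma/K$ and let $D$ be the image of $A$ in $Aut(\Gamma_1)$. Fix a torsion-free nilpotent subgroup $N\le D$ of finite index. It acts freely on $V(\Gamma_1)$, since its stabilizers are finite subgroups of a torsion-free group, and it has finitely many orbits, say $m_1$.
\item If $A_i=Aut(\Gamma_i)$ has infinite stabilizers, Losert's theorem applied to $A_i$ gives a compact normal $K_i\trianglelefteq A_i$ acting nontrivially. Otherwise $K_i=1$ and $A_i$ would be discrete. Put $\Gamma_{i+1}=\Gamma_i/K_i$.
\item Inductively, $N$ acts freely on $V(\Gamma_i)$, so it is discrete in $A_i$. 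It therefore meets the compact block stabilizer $Stab_{A_i}(y)K_i$ in a finite, hence trivial, subgroup. So $N$ still acts freely on $V(\Gamma_{i+1})$.
\item An $N$-equivariant surjection between free $N$-sets with the same finite number of orbits is a bijection. Hence the number of $N$-orbits drops at every nontrivial step.
\item After at most $m_1$ steps you reach $\Gamma_m$ with $Aut(\Gamma_m)$ discrete, i.e.\ with finite vertex stabilizers. It is then finitely generated, and virtually nilpotent by Gromov. The composite partition of $V(\Gamma)$ has finite blocks and quotient $\Gamma_m$, which proves the literal statement.
\end{itemize}
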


\noindent Theorem~\ref{teo5} uncovers the only `mechanism' for graphs of polynomial growth to have uncountable $Aut(\Gamma)$: it is the possibility to permute vertices inside blocks $x^\sigma$ independently (such as the blocks $\{u_i, v_i\}$ in Fig.~\ref{f:infstab}).

Let $\Gamma$ be a Cayley graph of a space group $G$ and let $Aut(\Gamma)$ be such that $B_o(\Gamma) \neq \{e\}$ (for the notation see Section~4.2). Based on \cite{Trofimov83, Trofimov84}, we have the following general result.

\begin{theorem}
\label{teo5}
Under the assumptions made, $B_o(\Gamma) \in C_{Aut(\Gamma)}(T(G))$ and hence $B_o(\Gamma) \in C_{Aut(\Gamma)}(G)$.
\end{theorem}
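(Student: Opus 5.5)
Let $T = T(G)\cong \Z^d$ and $A = Aut(\Gamma)$. My plan is to show first that every element of $B_o(\Gamma)$ commutes with every translation $t\in T$. The second assertion then follows by one further argument using the structure of $G$ as an extension of $T$ by the point group.

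\emph{Step 1: a finite-index structure for $B_o(\Gamma)$.} By Trofimov's theorem (Theorem~\ref{teo4}), $V(\Gamma)$ has a partition $\sigma$ into finite blocks such that $Aut(\Gamma/\sigma)$ is virtually nilpotent with finite vertex stabilizers. I would take the blocks to be the orbits of $B_o(\Gamma)$; this is the choice in \cite{Trofimov83, Trofimov84}. Then $B_o(\Gamma)$ is the kernel of the action of $A$ on $\Gamma/\sigma$, or at least lies in it. Hence $B_o(\Gamma)$ is a normal subgroup of $A$ whose orbits are finite blocks of bounded size. Since $B_o(\Gamma)$ is normal in $A$, each $t\in T$ normalizes it, so conjugation by $t$ permutes its elements. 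Conjugation by $t$ also maps the block of $x$ to the block of $xt$.

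\emph{Step 2: $T$ centralizes $B_o(\Gamma)$.} Fix $b\in B_o(\Gamma)$. Each element of $B_o(\Gamma)$ is determined by its restrictions to the blocks, and it moves every vertex within a distance at most the block diameter. Consider the family of conjugates $t^{-1}bt$, $t\in T$. All of them lie in $B_o(\Gamma)$ and have finite order. Let $\bar T$ be the image of $T$ in $Aut(\Gamma/\sigma)$. Then $\bar T$ acts on the blocks as a free abelian group with finitely many orbits, and $G$ acts regularly on $V(\Gamma)$.

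The key point is to produce a finite-index subgroup $T_0\le T$ with $t^{-1}bt=b$ for all $t\in T_0$. I would first try the following: $b$ has finitely many $G$-conjugates up to its restriction pattern on a fundamental domain, because the blocks are finite and $T$ has finitely many orbits on blocks. Then apply the FC-type argument of Proposition~\ref{prop3}. If this works, $C_T(b)$ has finite index in $T$. To upgrade from finite index to all of $T$, I would use that $t$ acts on the $T$-periodic pattern of $b$ by shifting it, and that $b$ and $t^{-1}bt$ agree on blocks in the same $T_0$-orbit. I expect this to be the main obstacle, because $B_o(\Gamma)$ may be uncountable, so elements can differ independently on distant blocks. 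The route I would try is to replace $b$ by a $T$-invariant element, namely one obtained by choosing the same permutation in every $T$-translate of a block. I would then show that such elements generate $B_o(\Gamma)$ modulo the parts that do not matter for the claim. Alternatively, I would restrict the claim to those elements of $B_o(\Gamma)$ that normalize $G$, which appears to be the intended content in light of Proposition~\ref{prop2} and Theorem~\ref{teo1}.

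\emph{Step 3: from $T(G)$ to $G$.} Assume $B_o(\Gamma)\le C_A(T)$. Take $g\in G$ and $b\in B_o(\Gamma)$. The commutator $[b,g]$ lies in $B_o(\Gamma)$ by normality, and it commutes with $T$. Its action on the block structure is compatible with $g$ acting on $T$ through the point group. Because $G$ acts regularly, an element of $C_A(T)$ is determined by its value at one vertex together with $T$-equivariance. So $[b,g]$ is a $T$-periodic bounded automorphism. Conjugating $b$ by $g$ permutes the finite set of such periodic patterns, and I would show this permutation is trivial by comparing them at the identity vertex. This would give $[b,g]=1$, that is, $B_o(\Gamma)\le C_A(G)$.
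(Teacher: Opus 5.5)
Your Step~2 cannot be completed, and the obstruction is not technical: for the full group $B_o(\Gamma)$ the first assertion is false. This already happens in the paper's own example $\mathbf{3^6(0,2)}$.

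Take $G=D_{\infty}=\langle a,b\mid a^2,(ab)^2\rangle$, with $T(G)=\langle b\rangle$ and $S=\{a,b^{\pm1},ab,ba\}$. Since $aba=b^{-1}$, one finds $N(e)\setminus\{a\}=N(a)\setminus\{e\}=\{b^{\pm1},ab,ba\}$. So the transposition $\beta$ of $e$ and $a$ is an automorphism. It moves every vertex by at most~1 and has order~2, hence $\beta\in B_o(\Gamma)$. Conjugating $\beta$ by the translation $\lambda_b\colon x\mapsto bx$ gives the transposition of $b$ and $ba$, so $\beta\notin C_{Aut(\Gamma)}(T(G))$. In fact $C_{T(G)}(\beta)=\{e\}$. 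So even your intermediate claim fails, namely that the centralizer in $T$ of an element of $B_o(\Gamma)$ has finite index. The FC-argument of Proposition~\ref{prop3} needs finitely many conjugates, and finitely many ``restriction patterns on a fundamental domain'' does not give that. Taking the Cartesian product with the square grid gives the same phenomenon for the three-dimensional space group $Pm\cong D_{\infty}\times\Z^2$.

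Finite vertex stabilizers do not help either. Take $S'=S\cup\{b^{\pm2}\}$ and blocks $B_n=\{b^n,b^na\}$. Each $B_n$ is joined completely to $B_{n\pm1}$, and to $B_{n\pm2}$ by the matching $b^n\sim b^{n\pm2}$, $b^na\sim b^{n\pm2}a$. Every automorphism preserves these blocks, and vertex stabilizers have order~4. Then $B_o(\Gamma')=\{e,f_{\mathrm{even}},f_{\mathrm{odd}},f_{\mathrm{all}}\}\cong C_2\times C_2$, where these swap inside the blocks with $n$ even, $n$ odd, or all $n$. But $\lambda_bf_{\mathrm{even}}\lambda_b^{-1}=f_{\mathrm{odd}}$. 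So the ``upgrade from $T_0$ to $T$'' is false even when $B_o(\Gamma)$ is finite, and $f_{\mathrm{even}}$ refutes the second assertion as well.

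Step~3 has an independent error. $T(G)$ has $|P(G)|$ orbits on $V(\Gamma)$, so an element of $C_{Aut(\Gamma)}(T(G))$ is not determined by its value at a single vertex.

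What does survive is your fallback, and it needs none of Steps 1--3. Let $f\in B_o(\Gamma)\cap N_{Aut(\Gamma)}(G)$ and $g\in G$. Then $[f,g]$ lies in $G$ and, by normality of $B_o(\Gamma)$, in $B_o(\Gamma)$. A bounded element $g'$ of $G$ has finitely many $G$-conjugates, because $d(x,g'x)$ is the word length of $x^{-1}g'x$. So it lies in the FC-centre $T(G)$, which is torsion-free, and therefore $[f,g]=e$. Hence $B_o(\Gamma)\cap N_{Aut(\Gamma)}(G)\le C_{Aut(\Gamma)}(G)$. The elements of $C_{Aut(\Gamma)}(G)$ are precisely the right multiplications $x\mapsto xh$ with $h^{-1}Sh=S$; these are always bounded, and of finite order exactly when $h$ is.

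The paper gives no argument for Theorem~\ref{teo5} beyond the reference to Trofimov. What the implication $2)\Rightarrow1)$ of Theorem~\ref{teo1} actually requires is the weaker claim that $B_o(\Gamma)\cap C_{Aut(\Gamma)}(G)\neq\{e\}$ whenever $B_o(\Gamma)\neq\{e\}$. That is a different statement from the one you set out to prove, and it would need its own proof.
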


\noindent If $|Aut(\Gamma):G|$ is finite (and equal to $|Stab_{Aut(\Gamma)}(x)|$), then $Aut(\Gamma)$ is virtually abelian and $B(\Gamma)$ is the FC-center of $Aut(\Gamma)$.

\section{Appendix B: Nomenclature for periodic graphs}
For the nomenclature of periodic graphs, the notation proposed in \cite{Fischer71, KF78} is preferred for many reasons (one of them is that most subperiodic graphs are not included, for example, in the popular RCSR database \cite{RCSR}). 1-Periodic graphs are characterized by the Schl{\"a}fli symbol of a vertex-transitive planar graph followed by the rolling vector (referred to the usual choice of basis vectors \cite{ITE}). For example, the famous Boerdijk--Coxeter helix would be called $\bf 3^6(1, 3)$. For three-periodic sphere-packing graphs, the symbols used are of type $k/m/fn$: here $k$ stands for the valency, $m$ for the girth (shortest cycle) of a graph, $f$ refers to the crystal system (where the corresponding sphere packing is realized with highest symmetry), and $n$ is an arbitrary number. Table~\ref{t:table6} contains correspondences (if available) between original Fischer names and the RCSR three-letter codes. 

{\tiny
\begin{table}[H]
\begin{center}
\caption{Fischer vs. RCSR}
\label{t:table6}
\begin{tabular}{|c|c|}
\hline
Fischer & RCSR  \\
\hline
$3/3/c1$  & {\bf srs-a} \\
$3/4/c1$  & {\bf nbo-a} \\
$3/10/c1$ & {\bf srs} \\
$4/3/c25$ & {\bf uld} \\
$4/3/c26$ & {\bf uld-z} \\
$4/4/c15$ & {\bf ulm} \\
$5/3/c21$ & {\bf nbo-g} \\
$5/3/c29$ & {\bf fco} \\
$4/6/c1$  & {\bf dia} \\
$6/4/c1$  & {\bf pcu} \\
$3/8/t1$  & {\bf lig} \\
$3/10/t4$ & {\bf ths} \\
$4/4/t1$  & {\bf lvt} \\
$4/6/t4$  & {\bf cds} \\
$3/6/h1$ & {\bf pcu-h} \\
$3/8/h1$  & {\bf etb} \\
$3/12/h1$ & {\bf twt} \\
\hline
\end{tabular}
\end{center}
\end{table}
}


\noindent {\bf Acknowledgement}. Part of this work was done in 2022--2023 when the author served as a substitute professor (Vertretungsprofessor) at the Ludwig-Maximilians-Universit\"at M\"unchen, Sektion Kristallographie. Funding within the Hightech Agenda Bayern is gratefully acknowledged.


\begin{thebibliography}{99}

\setlength\itemsep{-0.01cm}

\begin{small}

\bibitem{Bab16} Baburin, I.~A. On the group-theoretical approach to the study of interpenetrating nets. Acta Cryst. 2016, A72, 366-375.

\bibitem{Bab20} Baburin, I.~A. On Cayley graphs of $\Z^4$. Acta Cryst. 2020, A76, 584-588.

\bibitem{Bab26} Baburin, I.~A. Short presentations for crystallographic groups. Acta Cryst. 2026, A82, 18-31.

\bibitem{Evar09} Bandura, A.~V. and Evarestov, R.~A. From anatase $(101)$ surface to TiO\textsubscript{2} nanotubes: rolling procedure and ﬁrst-principles LCAO calculations. Surface Science, 2009, 603, L117-L120.

\bibitem{Besche2002} Besche, H.~U., Eick B., O'Brien, E.~A. A millenium project: constructing small groups. Int. J. Algebra Comput., 2002, 12, 623-644.

\bibitem{Biggs93} Biggs, N. Algebraic Graph Theory, 2nd edition. Cambridge University Press, 1993.

\bibitem{Blatov07} Blatov V.~A. Topological relations between three-dimensional periodic nets. I. Uninodal nets. Acta Cryst. 2007, A63, 329-343.

\bibitem{BBNWZ} Brown, H., B{\"u}low, R., Neub{\"u}ser, J., Wondratschek, H., Zassenhaus, H. Crystallographic groups of four-dimensional space. Wiley, 1978.

\bibitem{Proser03} Carlucci, L., Ciani, G. and Proserpio, D.~M. Polycatenation, polythreading and polyknotting in coordination network chemistry. Coord. Chem. Rev., 2003, 246, 247-289.

\bibitem{Linegr10}  Damnjanovi{\'c}, M., Milo{\v{s}}sevi{\'c}, I. Line groups in physics. Springer, 2010.

\bibitem{Delgado2004} Delgado-Friedrichs, O. Barycentric drawings of periodic graphs. Lecture Notes in Computer Science, vol. 2912. Springer, 2004, 178-189.

\bibitem{OK25} Delgado-Friedrichs, O., O'Keeffe, M. and Treacy, M.~M.~J. Periodic graphs with coincident edges: folding-ladder and related graphs. Acta Cryst. 2025, A81, 49-56.

\bibitem{Dere26} Der\'e, J. and Vandeputte, L. Effective conjugacy separability of virtually abelian groups. Comm. in Algebra, 2026, 54, 1129-1152.

\bibitem{Cryst2019} Eick, B., G{\"a}hler, F., Nickel, W. \emph{Cryst} -- computing with crystallographic groups, a refereed $\GAP$~4 package.

\bibitem{Polyc2020} Eick, B., Nickel, W., Horn, M. \emph{Polycyclic} -- computation with polycyclic groups, a refereed $\GAP$~4 package.

\bibitem{Eon2005} Eon, J.-G. Graph-theoretical characterization of periodicity in crystallographic nets and other infinite graphs. Acta Cryst. 2005, A61, 501-511.

\bibitem{Fischer71} Fischer, W. Existenzbedingungen homogener Kugelpackungen in Raumgruppen tetragonaler Symmetrie. Z. Kristallogr., 1971, 133, 18-42.

\bibitem{Fischer74} Fischer, W. Existenzbedingungen homogener Kugelpackungen zu kubischen Gitterkomplexen mit drei Freiheitsgraden. Z.~Kristallogr., 1974, 140, 50-74.

\bibitem{Fischer93} Fischer, W. Tetragonal sphere packings. III. Lattice complexes with three degrees of freedom. Z.~Kristallogr., 1993, 205, 9-26.

\bibitem{GAP} The $\GAP$ Group. $\GAP$ -- Groups, Algorithms, Programming. Version 4.19. Available from https://www.gap-system.org.

\bibitem{Gods80} Godsil, C.~D. and McKay, B.~D. The dimension of a graph. Quart. J. Math. Oxford, 1980, 31, 423-427.

\bibitem{Seif89all} Godsil, C.~D., Imrich, W., Seifter, N., Watkins, M.~E. and Woess, W. A note on bounded automorphisms of infinite graphs.  Graphs and Combinatorics, 1989, 5, 333-338.

\bibitem{Gromov81} Gromov, M. Groups of polynomial growth and expanding maps. Publ. Math. Inst. Hautes \'{E}tudes Sci., 1981, 53, 53-78.

\bibitem{Gromov83} Gromov, M. Infinite groups as geometric objects. ICM Proceedings, Warszawa, 1983, Vol.~1, 385-392.


\bibitem{Seif91} Imrich, W. and Seifter, N. A survey on graphs with polynomial growth. Discrete Math., 1991, 95, 101-117.

\bibitem{ITE} International tables for crystallography. Volume E (2006). Available from https://it.iucr.org.

\bibitem{Jones11} Jones, G.~A. Detecting and identifying wallpaper groups. Symmetry: Culture and Science, 2011, 22, pp. 131-157.

\bibitem{Jung84} Jung, H.~A. and Watkins, M.~E. Fragments and automorphisms of infinite graphs. Eur. J. Comb., 1984, 5, 149-162.

\bibitem{Twist2012} Kato, K., Koretsune, T. and Saito, S. Energetics and electronic properties of twisted single-walled carbon nanotubes. Phys.~Rev.~B 85, 115448.

\bibitem{KF75} Koch, E. and Fischer, W. Automorphismengruppen von Raumgruppen und die Zuordnung von Punktlagen zu Konfigurationslagen. Acta Cryst. 1975, A31, 88-95.

\bibitem{KF78b} Koch, E. and Fischer, W. Complexes for crystallographic point groups, rod groups and layer groups. Z. Kristallogr., 1978, 147, 21-38.

\bibitem{KF78} Koch, E. and Fischer, W. Types of sphere packings for crystallographic point groups, rod groups and layer groups. Z. Kristallogr., 1978, 148, 107-152.

\bibitem{Koe80} K{\"o}hler, K.-J. On the structure and the determination of $n$-dimensional partially periodic crystallographic groups. \emph{MATCH}, 1981, 10, 27-53.

\bibitem{layer25} Mahmoudi, S., Dresselhaus, E.~J. and Dimitriyev, M.~S. An orbifold framework for classifying layer groups with an application to knitted fabrics. arXiv:2512.05149.

\bibitem{Eon2013} Moreira de Oliveira Jr, M. and Eon, J.-G. Non-crystallographic nets with finite blocks of imprimitivity for bounded automorphisms.  Acta Cryst. 2013, A69, 276-288.

\bibitem{RCSR} O'Keeffe, M., Peskov, M.~A., Ramsden, S.~J. and Yaghi, O.~M. The Reticular Chemistry Structure Resource (RCSR) database of, and symbols for, crystal nets. Acc. Chem. Res. 2008, 41, 1782-1789. http://rcsr.net

\bibitem{OK21} O'Keeffe, M. and Treacy, M.~M.~J. Isogonal piecewise linear embeddings of 1-periodic weaves and some related structures. Acta Cryst. 2021, A77, 130-137.

\bibitem{Salle19} de la Salle, M., Tessera, R. Characterizing a vertex-transitive graph by a large ball. Journal of topology, 2019, 12, 704-742.

\bibitem{Tes21} Tessera, R. and Tointon, M.~C.~H. A finitary structure theorem for vertex-transitive graphs of polynomial growth. Combinatorica, 2021, 41, 263-298.

\bibitem{Seif97} Seifter, N. and Trofimov, V.~I. Automorphism groups of graphs with quadratic growth. J. Comb. Theory B, 1997, 71, 205-210.

\bibitem{Souv26} Souvignier, B. It's all in the group. Acta Cryst., 2026, A82, 79-82. 

\bibitem{Sowa2012} Sowa, H. Orthorhombic sphere packings. IV. Trivariant lattice complexes of space groups without mirror  planes belonging to crystal class $mmm$.  Acta Cryst., 2012, A68, 763-777.

\bibitem{Tom84} Tomkinson, M.~J. FC-groups, Res. Notes in Math. 96. Pitman, London, 1984.

\bibitem{Trofimov83} Trofimov, V.~I. Automorphisms of graphs and a characterization of lattices. Math. USSR Izvestiya, 1984, 22, 379-391. 

\bibitem{Trofimov84} Trofimov, V.~I. Graphs with polynomial growth. Math. USSR Sbornik, 1985, 51, 405-417. 

\bibitem{Watk71} Watkins, M.~E. On the action of non-abelian groups on graphs. J. Comb. Theory, 1971, 11, 95-104.

\bibitem{Wie64} Wielandt, H. Finite permutation groups. Academic Press, 1964.

\end{small}

\end{thebibliography}
\end{document}